\newcommand*\patchAmsMathEnvironmentForLineno[1]{%
  \expandafter\let\csname old#1\expandafter\endcsname\csname #1\endcsname
  \expandafter\let\csname oldend#1\expandafter\endcsname\csname end#1\endcsname
  \renewenvironment{#1}%
     {\linenomath\csname old#1\endcsname}%
     {\csname oldend#1\endcsname\endlinenomath}}%
\newcommand*\patchBothAmsMathEnvironmentsForLineno[1]{%
  \patchAmsMathEnvironmentForLineno{#1}%
  \patchAmsMathEnvironmentForLineno{#1*}}%
\newcommand{\supp}{\mathop{\mathrm{supp}}\nolimits}
\newcommand{\sgn}{\mathop{\mathrm{sgn}}\nolimits}
\DeclareFontFamily{U}{tipa}{}
\DeclareFontShape{U}{tipa}{m}{n}{<->tipa10}{}
\newcommand{\arc@char}{{\usefont{U}{tipa}{m}{n}\symbol{62}}}%
\newcommand{\arc}[1]{\mathpalette\arc@arc{#1}}
\newcommand{\arc@arc}[2]{%
  \sbox0{$\m@th#1#2$}%
  \vbox{
    \hbox{\resizebox{\wd0}{\height}{\arc@char}}
    \nointerlineskip
    \box0
  }%
}
\DeclareMathOperator{\conv}{conv} 
\DeclareMathOperator{\im}{im}
\DeclareMathOperator{\cl}{cl}
\DeclareMathOperator{\rint}{rint}
\DeclareMathOperator{\intr}{int}
\newtheorem*{def*}{Definition}
\newtheorem*{rem*}{Remark}
\newtheorem*{cor*}{Corollary}
\newtheorem{prop}{Proposition}
\newtheorem{lem}{Lemma}
\newtheorem{sublem}{Sublemma}
\newtheorem*{lem1'}{Lemma $\mathbf{1^\prime}$}
\newtheorem{theorem}{Theorem}
\theoremstyle{definition}
\theoremstyle{remark}
\newtheorem{remark}{Remark}
\def\R{\mathbb{R}}
\def\N{\mathbb{N}}
\def\E{\mathbb{E}}
\def\P{\mathbb{P}}
\def\S{\mathbb{S}}
\def\I{\mathbbm{1}}
\newcounter{fig}
\newcommand{\f}{\refstepcounter{fig} Fig. \arabic{fig}. }
\begin{document}

\ifpdf
\DeclareGraphicsExtensions{.pdf, .jpg, .tif, .mps}
\else
\DeclareGraphicsExtensions{.eps, .jpg, .mps}
\fi


\title[The isoperimetric problem for convex hulls and rate functionals]{The isoperimetric problem for convex hulls and the large deviations rate functionals of random walks}

\author{Vladislav~Vysotsky}
\address{Vladislav~Vysotsky, University of Sussex}
\email{v.vysotskiy@sussex.ac.uk}

\begin{abstract}
We study the asymptotic behaviour of the most likely trajectories of a planar random walk that result in large deviations of the area of their convex hull. If the Laplace transform of the increments is finite on $\R^2$, such a scaled limit trajectory $h$ solves the inhomogeneous anisotropic isoperimetric problem for the convex hull, where the usual length of $h$ is replaced by the large deviations rate functional $\int_0^1 I(h'(t)) dt$ and $I$ is the rate function of the increments. Assuming that the distribution of increments is not supported on a half-plane, we show that the optimal trajectories are  convex and satisfy the Euler--Lagrange equation, which we solve explicitly for every $I$. The shape of these trajectories resembles the optimizers in the isoperimetric inequality for the Minkowski plane, found by Busemann~(1947).
\end{abstract}

\subjclass[2020]{Primary: 60F10, 49K05; secondary: 52A40, 60D05, 60G50}
\keywords{Convex hull, convexification, Euler--Lagrange equation, inhomogeneous isoperimetric problem, random walk, rate functional, rearrangement inequality, smoothness of minimizers}

\maketitle

\section{Introduction} \label{Sec: intro}
Convex hulls of random walks in $\R^d$ have been studied since the 1960s and especially intensively over the past decade. The main objects of interest  typically are the quantitative characteristics of the convex hull such as the surface area, volume, diameter, number of faces, etc., or the characteristics of the shape such as the probability that the convex hull does not contain the origin. We refer to~\cite{AkopyanVysotskyProbab} and~\cite{KVZChambers} for accounts of results and references. 

Let $(S_n)_{n \ge 1}$, where $S_n = X_1 + \ldots + X_n$, be a  random walk on the plane with independent identically distributed increments $X_1, X_2, \ldots$. Denote by $A_n$ the {\it area of the convex hull} of $0, S_1, \dots, S_n $. In this paper we aim to identify the most likely paths of the walk resulting in atypically large values of this quantity. We always assume that $X_1$ has finite Laplace transform and is not supported on a line passing through the origin, that is $\E e^{u \cdot X_1} <\infty$ and $\P(u \cdot X_1 =0)<1$ for every $u \in \R^2$. The latter assumption excludes the trivial case $A_n=0$. 

The typical behaviour of the area was described by~Wade and Xu~\cite{WadeXu2}. As a consequence of the invariance principle, they showed that  $A_n/n^a$ converge weakly to a non-degenerate limit as $n \to \infty$, where $a=1$ if $\mu := \E X_1$ is zero and $a=3/2$ otherwise. As for atypically large values of $A_n$, Akopyan and Vysotsky~\cite[Theorem~2.11]{AkopyanVysotskyProbab} showed that 
the random variables $A_n/n^2$ satisfy the large deviations principle with speed $n$ and the rate function $\mathcal J_A$ defined below in~\eqref{eq: rate func I_A}.  In particular, this implies that for any continuity point $a \ge 0$ of $\mathcal J_A$, we have
\begin{equation} \label{eq: LDP A}
\lim_{n \to \infty} \frac 1n \log \P(A_n \ge an^2) = -\mathcal J_A(a).
\end{equation}
The scaling $1/n^2$  for  the area of the convex hull arises from the large deviations scaling $1/n$ of the trajectory $S_1, \ldots, S_n$, as can be seen in~\eqref{eq: LDP conditional} below.

To define $\mathcal J_A$, we need the following notation. The {\it cumulant generating function} of $X_1$ is $K(u):=\log \E e^{u \cdot X_1}$ for $u \in \R^2$. It is known to be convex. The {\it  rate function} $I$ of $X_1$ is the convex conjugate of $K$, that is 
\[
I(v):= \sup_{u \in \R^2} ( u \cdot v - \log \E e^{u \cdot X_1}), \qquad v \in \R^2.
\] 
This is a convex function with values in $[0, +\infty]$ that satisfies $I(\mu)=0$. Furthermore, denote by $AC_0[0,1]$ the set of absolutely continuous planar curves (i.e.\ mappings $h:[0,1] \to \R^2$  with absolutely continuous coordinates $h_1$ and $h_2$) that satisfy $h(0)=0$. Denote by $A(h)$ the area of the convex hull of the image of a curve $h$. Finally, for $a \ge 0$ define
\begin{equation} \label{eq: rate func I_A}
\mathcal J_A(a):=\min_{\substack{h \in AC_0[0,1]:  \\ A(h) = a} } I_C(h), \quad \text{where} \quad I_C(h):= \int_0^1 I(h'(t)) dt.
\end{equation}
The minimum is always attained at some $h$ due to the {\it tightness} property of $I_C$ on $AC_0[0,1]$; see~\cite[Eq.~(4.12)]{AkopyanVysotskyProbab}. The function $I_C$ is the rate function for large deviations of trajectories of the random walk in the space $AC_0[0,1]$. For brevity, we will refer to $I_C$ as the {\it rate functional} or  {\it energy functional}. The function $\mathcal J_A$ satisfies $\mathcal J_A(0)=0$ and is strictly increasing on its effective domain $\{a \ge 0: \mathcal J_A(a) <\infty\}$.  

The goal of this paper is to find the minimizers of $\mathcal J_A$ in \eqref{eq: rate func I_A}. These minimizers define the asymptotic form of the most likely trajectories of the random walk resulting in the large deviations events $\{A_n \ge a n^2 \}$, as $n \to \infty$. More precisely, by \cite[Theorem~2.11]{AkopyanVysotskyProbab}, for every $\varepsilon > 0$ and every continuity point $a >0$ of $\mathcal J_A$ such that $\mathcal J_A(a)<\infty$, it is true that
\begin{equation} \label{eq: LDP conditional}
\lim_{n \to \infty} \P \Bigl(\max_{0 \le k \le n} \bigl | S_k/n - h(k/n) \bigr|  \le \varepsilon \text{ for some minimizer $h$ in \eqref{eq: rate func I_A}} \Bigl | \Bigr. A_n  \ge an^2 \Bigr) = 1;
\end{equation}
moreover, the rate of this convergence is exponential. 

The paper~\cite{AkopyanVysotskyProbab} explored the idea of finding the minimizers (and thus $\mathcal J_A$) using the  isoperimetric inequalities of Moran~\cite{Moran} and Pach~\cite{pach1978isoperimetric} for the area of the convex hull of a planar curve. This turned out to be possible when the distribution of $X_1$ was a linear image of either a rotationally invariant (with $\mu=0$) or a shifted standard Gaussian (with $\mu \neq 0$) distribution. 

In this paper we use a different approach, which covers a much wider class of distributions than in~\cite{AkopyanVysotskyProbab}. Our main result, Theorem~\ref{thm: variational}, shows that in the case where $X_1$ is not supported on a half-plane, every minimizer of $\mathcal J_A$ 
is obtained by certain shift, dilation, and rotation through $\pm \pi/2$ of the bijective parametrization $g$ of an arc of a level set of $K$ with centrally symmetric endpoints $g(0)=-g(1)$ and the speed $|g'|$ proportional to $|\nabla K(g)|$. To show this, we first employ a geometric argument, based on the procedure of {\it convexification} of planar curves, to show that the minimum in~\eqref{eq: rate func I_A} is  attained on convex curves; see Lemma~\ref{lem: convexification}. Hence by Green's formula, the constraint in \eqref{eq: rate func I_A} can we written as $\int_0^1 (h_1 h_2' - h_1'h_2)dt = 2a$. The question of finding $\mathcal J_A$ then becomes an isoperimetric-type problem of the classical variational calculus. We show  that the minimizers $h(t)$ are smooth and satisfy the Euler--Lagrange equations; see Lemma~\ref{lem: Lagrangian}. We were able to solve these equations explicitly, which was quite unexpected.

It is easy to overlook that a general integral functional $F=\int_0^1 f(t, x(t), x'(t)) dt$ may have absolutely continuous minimizers  that  {\it do not} satisfy the Euler--Lagrange equation; see e.g.~\cite[Proposition~6.13]{Buttazzo+}. This equation is satisfied only under some a priori assumptions on a minimizer $x_*$, e.g.\ that it is $C^1$-smooth or that $x_*'$ is essentially bounded; see~\cite{ClarkeVinter} for an illuminating discussion. For example, the latter assumption is well-justified for a standard variational proof of Dido's isoperimetric inequality by solving the Euler--Lagrange equations, because every minimizer can be time-changed to have a constant speed  since this does not change the length and the enclosed area. However, this argument does not apply when $F$ is not homogeneous. We also note that the infimum of $F$ over absolutely continuous functions can be strictly smaller than the infimum over smooth or even over Lipshitz functions; this is referred to as the {\it Lavrentiev phenomenon}, see~\cite[Section~4.3]{Buttazzo+}.

Thus, in general one cannot hope of finding the minimizers of $F$ by solving the Euler--Lagrange equations. Additional assumptions are needed, and it took us a serious effort to find due references. We therefore think that the corresponding part of the paper may be of significant interest to the probabilists working on variational problems arising in the studies of large deviations. Further examples of such problems coming from applications can be found e.g.\ in the book~\cite{DeviationsBook}, where Appendix C also highlights the difficulties with variational calculus described above.  


To place our result in a wider context, we first note that the variational problem~\eqref{eq: rate func I_A} resembles the classical Dido's isoperimetric problem if we formally let $I(v)=|v|$, thus replacing $I_C(h)$ by the length of $h$. A more general isoperimetric problem for closed curves of fixed {\it Minkowski length} was solved by Busemann~\cite{Busemann}; more on this below. His result can be also stated in terms of the surface energy, which is minimized on the so-called {\it Wulff sets}, see~\cite{Fonseca}. For further isoperimetric inequalities in non-Euclidean metrics, without aiming to be comprehensive we refer to the surveys~\cite{MartiniMustafaev, Osserman} and to the quantitative-type results in~\cite{Figalli+, Osserman79}. All these isoperimetric inequalities concern the area bounded by closed curves. The versions of Dido's problem for convex hulls was solved by Moran~\cite{Moran} for the curves with free endpoints and by Pach~\cite{pach1978isoperimetric} for the fixed endpoints; further results are available in~\cite{Tilli} and the survey~\cite{Zalgaller}. 

The main difference of our isoperimetric problem from the ones above is that the rate function $I$ is not homogeneous, hence the time-changed minimizers are no longer optimal. A simplest related inhomogeneous example is the Chaplygin optimal control problem (see~\cite[Section~1.6.4]{ATF}) 
of finding a closed planar path of an airplane moving in a constant wind field that encircles the maximum possible area at a unit time, possibly under additional constraints on the  velocity of the plane. Formally this corresponds to $I(v)=|v-\mu|$  for $v \in B$ and $I(v)=\infty$ for $v \not \in B$, where $B \subset \R^2$ is a closed set and $\mu \neq 0$. 

We already commented above on the use of isoperimetric inequalities the context of convex hulls of random walks. The other applications of such inequalities can be found in the papers~\cite{Khoshnevisan, KuelbsLedoux}, which studied the laws of iterated logarithms for the convex hulls. This topic is closely related to the large deviations problem considered here. A recent work in this direction is~\cite{Cygan+}, which established the law of iterated logarithm for $A_n$.


\section{The main results}
For any closed set $C \subset \R^2$, denote by $A(C)$ the area of the convex hull $\conv( C)$ of $C$; then $A(h)=A(h([0,1]))$ for $h \in AC_0[0,1]$ in \eqref{eq: rate func I_A}. For any $u \in \R^2$, let $u^\bot$ stand for $u$ rotated counterclockwise through $\pi/2$ about the origin. Denote by $\supp(X)$ the topological support of the distribution of a random variable $X$. 

The case of our main interest is where $\conv(\supp(X_1))$ is two-dimensional. However, for the purpose of completeness we will also consider the case where $X_1$ is supported on a straight line not passing through the origin. In this case the trajectory of $S_n$ can be regarded as the graph of a one-dimensional random walk. 

To state our results, we shall introduce further notation for the two-dimensional case, where we will additionally assume that $0 \in \intr(\conv(\supp(X_1)))$; equivalently, there is no non-zero $u \in \R^2$ such that $u \cdot X_1 \ge 0$ a.s. This implies that the cumulant generating function $K$ of $X_1$ increases to infinity in every direction, hence $K$ is bounded from below and has bounded level sets because it is convex; see~\cite[Theorem~8.7]{Rockafellar}. 



For every $\alpha > 0$, $\ell \in \S$, and $\tau \in \{+, - \}$, denote by
\begin{equation} \label{eq: E^pm def}
E(\alpha):=A ( K^{-1}(\alpha)), \qquad E^\tau(\alpha, \ell): = A \bigl( \{ u \in K^{-1}(\alpha): \tau u \cdot \ell^\bot \ge 0 \}\bigr)
\end{equation}
the area of the sub-level set $K^{-1}((-\infty, \alpha])$ and its two subsets that lie to the different sides of the line $\ell \R$. This line intersects $K^{-1}(\alpha)$ at two points since $K(0)=0$. We also put
\begin{equation} \label{eq: lambda^pm}
\lambda_{\alpha, \ell}^\tau:=\int_{\{ u \in K^{-1}(\alpha): \tau u \cdot \ell^\bot \ge 0 \}} \frac{1}{|\nabla K(s)|} \sigma(ds),
\end{equation}
where $\sigma$ stands for the one-dimensional Hausdorff measure normalized such that $\sigma(K^{-1}(\alpha))$ is the length of $K^{-1}(\alpha)$.
This quantity has the following meaning: by the coarea formula, which is a ``curvilinear'' generalization of Fubini's theorem (see \cite[Theorem~3.13(ii)]{EvansGariepy}), 
\begin{equation} \label{eq: coarea}
\frac{\partial}{\partial \alpha } E^\tau(\alpha, \ell) = \lambda_{\alpha, \ell}^\tau.
\end{equation}
If the distribution of $X_1$ is  centrally symmetric, then $E^\tau(\alpha, \ell) = \frac12 E(\alpha) $ for every $\ell \in \S$.

Furthermore, denote by $g_{\alpha, \ell}^\tau$ the bijective continuous parametrization  of the arc $\{ u \in K^{-1}(\alpha) : \tau u \cdot \ell^\bot \ge 0 \}$ by $[0,1]$ that  has orientation $\tau$ and satisfies
\begin{equation} \label{eq: parametrization 2} 
\int_{g_{\alpha, \ell}^\tau([0,t])} \frac{1}{|\nabla K(s)|} \sigma(ds) = t \lambda_{\alpha, \ell}^\tau, \quad t \in [0,1].
\end{equation}
We will comment below on existence of $g_{\alpha, \ell}^\tau$ and prove its smoothness. Note that the half-lines $\ell \R_+$ and $\ell \R_-$ intersect $K^{-1}(\alpha)$ at the respective points $g_{\alpha, \ell}^\tau(0)$ and $g_{\alpha, \ell}^\tau(1)$, and by the change of variable formula, \eqref{eq: parametrization 2} is equivalent to
\begin{equation} \label{eq: parametrization} 
| (g_{\alpha, \ell}^\tau)'(t)| = \lambda_{\alpha, \ell}^\tau |\nabla K(g_{\alpha, \ell}^\tau(t))|, \quad t \in [0,1].
\end{equation}

We now state the main result of the paper.

\begin{theorem} \label{thm: variational}
Suppose that $\E e^{u \cdot X_1}< \infty$ for every $u\in \R^2$.

1. Assume that $\conv( \supp(X_1)) = \R^2$. Then for any $a>0$, every minimizer $h$ in \eqref{eq: rate func I_A} is a $C^\infty$-smooth curve of the form
\begin{equation} \label{eq: optimal curve}
h(t)=-\frac{1}{\tau \lambda_{\alpha, \ell}^\tau}\big(g_{\alpha, \ell}^\tau(t) - g_{\alpha, \ell}^\tau(0)\big)^\bot, \quad t \in [0,1],
\end{equation}
where $\alpha>0$, $\tau \in \{+,-\}$, $\ell \in \S$ are such that 
\[
\frac{\partial}{\partial \alpha} \sqrt{E^\tau(\alpha, \ell)}= \frac{1}{2 \sqrt a}
\] 
and the two-point set $K^{-1}(\alpha) \cap \ell \R$ is centrally symmetric. Moreover, if the distribution of $X_1$ is centrally symmetric, i.e.\ $X_1 \stackrel{d}{=} -X_1$, then $\alpha$ is the unique positive solution to $(\sqrt{ E(\alpha)})'=1/\sqrt{2a}$ and every curve of the form~\eqref{eq: optimal curve} with arbitrary $\tau$ and $\ell$ is a minimizer.

2. Assume that $\conv( \supp(X_1)) = \{\mu_1\} \times \R$ for a real $\mu_1 \neq 0$. Then for any $a >0$, the minimizers in \eqref{eq: rate func I_A} are  the two curves 
\begin{equation} \label{eq: h_pm}
h_\pm(t)=\Big(\mu_1 t, \frac{1}{\pm2 u_a} (K(0, \pm u_a(2t-1)) - K(0, \mp u_a) )\Big), \quad t \in [0,1], 
\end{equation}
where $u_a$ is the unique positive solution to $4a=|\mu_1| E'(u)$ with $E(u):=\int_{-1}^1 K(0,us)ds$.
\end{theorem}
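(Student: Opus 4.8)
The plan is to reduce the problem to the classical variational calculus via the two lemmas advertised in the introduction, and then solve the resulting Euler--Lagrange system explicitly. First I would invoke Lemma~\ref{lem: convexification}: since the minimum in~\eqref{eq: rate func I_A} is attained on a convex curve, we may restrict attention to convex $h$. For such $h$, Green's formula rewrites the area constraint $A(h)=a$ as $\frac12\int_0^1 (h_1 h_2' - h_1' h_2)\,dt = a$. Thus we are minimizing $I_C(h)=\int_0^1 I(h'(t))\,dt$ subject to this isoperimetric-type constraint, with the free boundary condition $h(1)$ unconstrained (the curve is closed up to the segment joining $h(1)$ to $h(0)=0$, which contributes nothing to the area). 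I would then appeal to Lemma~\ref{lem: Lagrangian} to conclude that every minimizer is $C^\infty$-smooth and satisfies the Euler--Lagrange equation for the Lagrangian $L(h,h') = I(h') - c(h_1 h_2' - h_1' h_2)$, where $c$ is a Lagrange multiplier, together with the transversality/natural boundary conditions at $t=1$.

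Next I would exploit the structure of $I$. Since $I = K^*$ is the convex conjugate of the essentially smooth, strictly convex $K$ (strict convexity and smoothness hold because $\conv(\supp(X_1))=\R^2$ forces $K$ to be finite and steep everywhere, see~\cite[Theorem~26.3]{Rockafellar}), the gradient $\nabla I$ is the inverse of $\nabla K$. Writing the Euler--Lagrange equation $\frac{d}{dt}\nabla I(h'(t)) = $ (terms from the area constraint), one finds after computing the variational derivatives of $h_1 h_2' - h_1' h_2$ that $\frac{d}{dt}\nabla I(h'(t)) = 2c\, (h'(t))^\bot$ — i.e.\ the ``momentum'' $p(t):=\nabla I(h'(t))$ rotates at a rate proportional to $h'$. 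Equivalently $h'(t) = \nabla K(p(t))$ and $p'(t) = 2c\,\nabla K(p(t))^\bot$. The second equation shows that $p(t)$ moves along a level set of $K$: indeed $\frac{d}{dt}K(p(t)) = \nabla K(p(t))\cdot p'(t) = 2c\,\nabla K(p(t))\cdot \nabla K(p(t))^\bot = 0$. So $p(t) \in K^{-1}(\alpha)$ for some constant $\alpha$, and along this level set the speed satisfies $|p'(t)| = |2c|\,|\nabla K(p(t))|$, which is precisely the parametrization condition~\eqref{eq: parametrization} defining $g_{\alpha,\ell}^\tau$ (with $|2c| = \lambda_{\alpha,\ell}^\tau$ after identifying the total parameter length). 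Then $h(t) = \int_0^t \nabla K(p(s))\,ds = \frac{1}{2c}\int_0^t p'(s)^{-\bot}\,ds$ — unwinding the rotation — yields $h(t) = -\frac{1}{2c}(p(t)-p(0))^\bot$, matching~\eqref{eq: optimal curve} once $p=g_{\alpha,\ell}^\tau$ and $2c = \tau\lambda_{\alpha,\ell}^\tau$.

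It then remains to pin down the free parameters using the boundary/transversality conditions and the constraint. The natural boundary condition at $t=1$ (free endpoint, but the closing segment from $h(1)$ to $0$ does enter the area) should force $h(1)$ and $h(0)=0$ to be positioned so that $p(1) = g_{\alpha,\ell}^\tau(1)$ and $p(0)=g_{\alpha,\ell}^\tau(0)$ are antipodal on $K^{-1}(\alpha)$, i.e.\ $K^{-1}(\alpha)\cap \ell\R$ is the centrally symmetric two-point set; this is where I expect the bulk of the careful work, since one must verify that no other configuration of endpoints can be optimal (a first-variation-in-the-endpoint argument using convexity of $K$). Finally, plugging the candidate into $I_C$ and the area constraint, one computes $I_C(h) = \alpha$ (the Legendre duality $I(\nabla K(p)) = p\cdot\nabla K(p) - K(p)$ integrates against the parametrization to give exactly $\alpha$) and $A(h) = \frac{1}{(2c)^2}E^\tau(\alpha,\ell) = E^\tau(\alpha,\ell)/(\lambda_{\alpha,\ell}^\tau)^2$; using the coarea identity~\eqref{eq: coarea} that $\partial_\alpha E^\tau = \lambda_{\alpha,\ell}^\tau$ one checks that setting $A(h)=a$ is equivalent to $\partial_\alpha \sqrt{E^\tau(\alpha,\ell)} = 1/(2\sqrt a)$. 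The centrally symmetric case is a direct specialization, where $E^\tau = \frac12 E$ for all $\ell$ and uniqueness of $\alpha$ follows from strict monotonicity of $\alpha \mapsto \sqrt{E(\alpha)}$ derived from~\eqref{eq: coarea}. Part~2 is the degenerate one-dimensional analogue: here $h_1(t)=\mu_1 t$ is forced, the area is $\int_0^1 |\mu_1 t\, h_2' - \mu_1 h_2|/1\cdots$ — more directly, $A(h) = |\mu_1|\cdot(\text{length of the interval swept by } h_2)$ handled by a one-dimensional version of the same Euler--Lagrange computation with $K(0,\cdot)$ in place of $K$, giving~\eqref{eq: h_pm} and the equation $4a = |\mu_1| E'(u_a)$. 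The main obstacle I anticipate is the rigorous treatment of the free endpoint: ensuring the transversality condition genuinely forces central symmetry of the endpoints and that the multiplier $c$ has the right sign/magnitude, rather than merely being consistent with it.
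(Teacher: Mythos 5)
Your overall approach is essentially the paper's: reduce to convex curves by Lemma~\ref{lem: convexification}, invoke Lemma~\ref{lem: Lagrangian} for the Euler--Lagrange system, observe that $p(t):=\nabla I(h'(t))$ traces out a level set $K^{-1}(\alpha)$, identify $p$ with $g_{\alpha,\ell}^\tau$ via the speed condition~\eqref{eq: parametrization}, and extract the equation for $\alpha$ from $A(h)=a$ together with~\eqref{eq: coarea}. But there is a genuine gap: you never rule out $\alpha=0$. Your derivation gives $K(p(t))\equiv\alpha:=K(p(0))$ with $p(0)=\nabla I(h'(0))$, and the transversality condition (which is already the second line of~\eqref{eq: EL main}, delivered by the lemma, not the ``bulk of the careful work'' you anticipate) gives $p(1)=-p(0)$; this only forces $\alpha\ge 0$, whereas~\eqref{eq: optimal curve} requires $\alpha>0$ for $\ell$ to even be defined. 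If $\mu=0$, the level set $K^{-1}(0)$ is $\{0\}$ and one gets $h\equiv 0$, contradicting $A(h)=a>0$. But if $\mu\neq 0$ one must separately exclude the configuration $h'(0)=\mu$, $p$ parametrizing the whole closed curve $K^{-1}(0)$. The paper does this by prepending a short segment in direction $\mu$ to form a competitor $h_\varepsilon$ with strictly smaller energy and, via a local osculating-circle computation near the origin, no smaller convex-hull area; this nontrivial step is absent from your argument, so you have not established $\alpha>0$.

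Two smaller errors. First, in the symmetric case you claim uniqueness of $\alpha$ ``from strict monotonicity of $\alpha\mapsto\sqrt{E(\alpha)}$''. Any increasing $E$ has a strictly increasing square root, so this gives nothing. What is needed is strict monotonicity of the \emph{derivative} $(\sqrt{E})'$, i.e.\ strict concavity of $\sqrt E$; the paper obtains this from Brunn--Minkowski combined with strict convexity of $K$. Second, the side-claim $I_C(h)=\alpha$ is not correct: Legendre duality gives $I(\nabla K(p))=p\cdot\nabla K(p)-K(p)$, hence $I_C(h)=\int_0^1 p(t)\cdot h'(t)\,dt-\alpha$, and the first integral is not $2\alpha$ in general. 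Finally, your treatment of Part~2 trails off mid-formula (the expression for the area is incomplete) and would need to be completed following the one-dimensional Euler--Lagrange equation~\eqref{eq: EL main 1D} and the identity $\tau\mu_1^{-1}A(h)=\tfrac12 h_2(1)-\int_0^1 h_2(t)\,dt$, which yields $E'(u)$ as in the paper.
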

Let us give general comments, and then give examples. 
\begin{enumerate}[leftmargin=*, label=\alph*)]
\item As usual with necessary conditions, in Part 1 the minimizers are of the form \eqref{eq: optimal curve}, but some curves of this form may not be minimizing. Therefore, the problem of finding $\mathcal J_A$  reduces to minimizing the energy functional $\int_0^1 I(h'(t)) dt$ over the curves of the form~\eqref{eq: optimal curve}. 

\item Let us explain how our theorem matches the isoperimetric inequality of Busemann~\cite{Busemann}. Consider the isoperimetric problem~\eqref{eq: rate func I_A} with $I$ substituted by 
\[
{\| \cdot \|}_B(v):= \inf\{r>0: v \in r B\},
\] 
which is the {\it Minkowski functional} of a compact convex set $B \subset \R^2$ such that $0 \in \intr B$. If $B$ is centrally symmetric, then $\| \cdot \|_B$ is a norm and $B$ is a centred unit ball in this norm. Busemann~\cite{Busemann} showed that the closed simple curves with a fixed Minkowski length $\int_0^1 {\| h'(t)\|}_B dt$ that enclose a maximal Euclidean area parametrize a translated dilation of $\partial B^\circ$ rotated through $\pm \pi/2$, where $B^\circ:= \cap_{u \in B} \{v \in \R^2: u \cdot v \le 1 \}$ is the polar set of $B$. To see the analogy with our result, recall that the convex conjugate ${\| \cdot \|}_B^*$ of ${\| \cdot \|}_B$ is $0$ on $B^\circ$ and $+\infty$ elsewhere. Then $\partial B^\circ$ is the boundary of the sub-level set ${\| \cdot \|}_B^*((-\infty, \alpha])$ for any $\alpha \ge 0$, and we see that $\partial B^\circ$ corresponds to the boundary $K^{-1}(\alpha)$ of the sub-level $K((-\infty, \alpha])$ since $K=I^*$.

\item The classical Cram\'er's large deviations principle for random walks in $\R^2$ is proved under the  assumption that  $\E e^{u \cdot X_1}< \infty$ for every $u$ in a neighbourhood of zero, instead of every $u \in \R^2$  as in our results. Proposition~4.1 in~\cite{AkopyanVysotskyProbab} established the large deviations principle for $A_n/n^2$ under this weaker assumption, which imposes additional essential difficulties. In this case,  the problem of finding the optimal trajectories of the random walk reduces to solving an isoperimetric problem in the class of (possibly discontinuous) curves of bounded variation equipped with a rate functional that is more complicated than $I_C$. This problem is solved in \cite{AkopyanVysotskyProbab} for walks with rotationally invariant distributions of increments. Naturally, in this case the Euler--Lagrange approach of the current paper can no longer be applied. 

\item The assumption $\conv( \supp(X_1)) = \R^2$ of Part 1, which means that there is no half-plane containing $\supp(X_1)$, is technical. It is  imposed to ensure that $I$ is finite and smooth on the whole of~$\R^2$. Essentially, this is needed only to apply the results of variational calculus used in the proof; see Theorem~\ref{thm: Cesari} in the Appendix. In some cases it is possible to remove this assumption using an approximation argument. We illustrate this by the following result, where we assume for simplicity that $X_1 \stackrel{d}{=} -X_1$. It still allows us to find $\mathcal J_A$ although we can no longer claim that all minimizers are as in~\eqref{eq: optimal curve}. For completeness, we also consider the one-dimensional case. As discussed in~\cite{Zalgaller}, in the subject of isoperimetric inequalities it is not uncommon to see additional assumptions, which presumably are excessive.  

\end{enumerate}

\begin{prop} \label{prop}
Suppose that $\E e^{u \cdot X_1}< \infty$ for every $u\in \R^2$. 

1. Assume that the distribution of $X_1$ is centrally symmetric and is not supported on a straight line. Then for any $a \in (0, a_{max})$, where $a_{max}:=\big( \lim_{\beta \to \infty} [\sqrt{2E(\beta)} ]' \big)^{-2}$, there is a minimizer $h$ in \eqref{eq: rate func I_A} that satisfies~\eqref{eq: optimal curve}.

2. Assume $X_1$ is supported on $\{\mu_1\} \times \R$ for a real $\mu_1 \neq 0$, and $X_1$ is not a constant a.s. Then for any $a \in (0, a_{max})$, where $a_{max}:=\frac14 \mu_1 \lim_{u \to \infty} E'(u)$, the functions $h_\pm$ in~\eqref{eq: h_pm} are among the minimizers in~\eqref{eq: rate func I_A}.
\end{prop}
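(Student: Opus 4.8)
The plan is to deduce both parts from Theorem~\ref{thm: variational} by a Gaussian smoothing of the increments, followed by a passage to the limit combined with a $\Gamma$-convergence argument for the energy functionals. I spell out Part~1; Part~2 is parallel and I only indicate the changes. Let $Z$ be a standard Gaussian vector in $\R^2$ independent of $X_1$ and set $X_1^{(\varepsilon)}:=X_1+\varepsilon Z$. Then $X_1^{(\varepsilon)}\stackrel{d}{=}-X_1^{(\varepsilon)}$, it has finite Laplace transform on $\R^2$, and $\conv(\supp(X_1^{(\varepsilon)}))=\R^2$, so Theorem~\ref{thm: variational}.1 applies to it; denote by $\mathcal J_A^{(\varepsilon)},K^{(\varepsilon)},I^{(\varepsilon)},E^{(\varepsilon)},\lambda^{(\varepsilon)}_{\cdot,\cdot},g^{(\varepsilon)}_{\cdot,\cdot}$ the objects of~\eqref{eq: rate func I_A}--\eqref{eq: parametrization} built from $X_1^{(\varepsilon)}$. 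Here $K^{(\varepsilon)}=K+\tfrac{\varepsilon^2}{2}|\cdot|^2$; the hypotheses of Part~1 force $\conv(\supp(X_1))$ to be two-dimensional (a centrally symmetric planar set that is not a segment through the origin), so $K$ is finite, coercive, strictly convex, $C^\infty$ with $\nabla K\ne 0$ off the origin, and the same holds for each $K^{(\varepsilon)}$, which moreover decreases to $K$ uniformly on compacts together with all derivatives as $\varepsilon\downarrow 0$, its bounded level sets converging in Hausdorff distance. Dually $I^{(\varepsilon)}=(K^{(\varepsilon)})^*\uparrow I$ pointwise, whence $I_C^{(\varepsilon)}\le I_C$ and, by monotone convergence, $I_C^{(\varepsilon)}(h)\uparrow I_C(h)$ for every $h\in AC_0[0,1]$.

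\emph{Stability and convergence of the minimizers.} By Theorem~\ref{thm: variational}.1 there is, for each $\varepsilon$, a minimizer $h^{(\varepsilon)}$ of $\mathcal J_A^{(\varepsilon)}(a)$ of the form~\eqref{eq: optimal curve} built from an arc of $(K^{(\varepsilon)})^{-1}(\alpha_\varepsilon)$, where $\alpha_\varepsilon$ is the unique positive root of $\tfrac{\partial}{\partial\alpha}\sqrt{E^{(\varepsilon)}(\alpha)}=\tfrac{1}{2\sqrt a}$ and $\tau,\ell$ may be chosen arbitrarily; fix $\tau,\ell$ once and for all. By the coarea formula~\eqref{eq: coarea}, $\tfrac{\partial}{\partial\alpha}E^{(\varepsilon)}=\lambda^{(\varepsilon)}_{\alpha,\ell}\to\lambda_{\alpha,\ell}=\tfrac{\partial}{\partial\alpha}E$ locally uniformly on $(0,\infty)$ (the integrands $1/|\nabla K^{(\varepsilon)}|$ converge and the level sets converge), so $E^{(\varepsilon)}\to E$ locally in $C^1$; since $a\in(0,a_{max})$, the limiting equation $\tfrac{\partial}{\partial\alpha}\sqrt{E(\alpha)}=\tfrac1{2\sqrt a}$ has a positive root $\alpha$ — using that $\beta\mapsto\tfrac{\partial}{\partial\beta}\sqrt{2E(\beta)}$ is continuous and strictly decreasing with infimum $1/\sqrt{a_{max}}$ as $\beta\to\infty$, a fact behind Theorem~\ref{thm: variational} that needs only $0\in\intr\conv(\supp(X_1))$ — and the above stability yields $\alpha_\varepsilon\to\alpha$. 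Hence the weighted-arclength parametrizations $g^{(\varepsilon)}_{\alpha_\varepsilon,\ell}$ converge uniformly to $g^{\tau}_{\alpha,\ell}$ and $\lambda^{(\varepsilon)}_{\alpha_\varepsilon,\ell}\to\lambda^{\tau}_{\alpha,\ell}$, so $h^{(\varepsilon)}\to h$ uniformly, where $h$ is the curve~\eqref{eq: optimal curve} attached to $X_1$ with parameters $\alpha,\tau,\ell$ (the set $K^{-1}(\alpha)\cap\ell\R$ being centrally symmetric automatically, as $K$ is even). Thus $h$ has the claimed form; it lies in $AC_0[0,1]$ (it is $C^\infty$ with $h(0)=0$) and $I_C(h)<\infty$, since $h'=\pm(\nabla K)\circ g^{\tau}_{\alpha,\ell}$ stays in a compact subset of $\intr\conv(\supp(X_1))$, where $I$ is finite and continuous.

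\emph{Passage to the limit.} Uniform convergence $h^{(\varepsilon)}\to h$ gives Hausdorff convergence of the convex hulls and hence $A(h)=\lim A(h^{(\varepsilon)})=a$, so $h$ is admissible and $I_C(h)\ge\mathcal J_A(a)$. For the reverse bound, if $h^*$ is any minimizer in~\eqref{eq: rate func I_A} then $\mathcal J_A^{(\varepsilon)}(a)\le I_C^{(\varepsilon)}(h^*)\uparrow I_C(h^*)=\mathcal J_A(a)$, so $\limsup_\varepsilon\mathcal J_A^{(\varepsilon)}(a)\le\mathcal J_A(a)$; while for fixed $\varepsilon_0$ and all $\varepsilon\le\varepsilon_0$ one has $I_C^{(\varepsilon)}(h^{(\varepsilon)})\ge I_C^{(\varepsilon_0)}(h^{(\varepsilon)})$, and lower semicontinuity of $I_C^{(\varepsilon_0)}$ along uniformly convergent curves (a standard consequence of convexity of $I^{(\varepsilon_0)}$, as used in~\cite{AkopyanVysotskyProbab}) gives $\liminf_\varepsilon I_C^{(\varepsilon)}(h^{(\varepsilon)})\ge I_C^{(\varepsilon_0)}(h)$, so letting $\varepsilon_0\downarrow 0$ with $I_C^{(\varepsilon_0)}(h)\uparrow I_C(h)$ we obtain $\liminf_\varepsilon\mathcal J_A^{(\varepsilon)}(a)=\liminf_\varepsilon I_C^{(\varepsilon)}(h^{(\varepsilon)})\ge I_C(h)\ge\mathcal J_A(a)$. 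Combining, $\mathcal J_A^{(\varepsilon)}(a)\to\mathcal J_A(a)$ and $I_C(h)=\mathcal J_A(a)$, i.e. $h$ is a minimizer, which proves Part~1. For Part~2, write $X_1=(\mu_1,Y)$, put $Y^{(\varepsilon)}:=Y+\varepsilon W$ with $W$ standard normal independent of $Y$ and $X_1^{(\varepsilon)}:=(\mu_1,Y^{(\varepsilon)})$, so $\conv(\supp(X_1^{(\varepsilon)}))=\{\mu_1\}\times\R$ and Theorem~\ref{thm: variational}.2 applies; since $K^{(\varepsilon)}(0,v)=K(0,v)+\tfrac{\varepsilon^2}{2}v^2$ one has $E^{(\varepsilon)}\to E$ locally in $C^1$, and, $E$ being strictly convex and even and $a\in(0,a_{max})$, the roots $u_a^{(\varepsilon)}$ of $4a=|\mu_1|(E^{(\varepsilon)})'(u)$ converge to the root $u_a$ of $4a=|\mu_1|E'(u)$; hence the minimizers $h_\pm^{(\varepsilon)}$ of~\eqref{eq: h_pm} (with $K,u_a$ replaced by $K^{(\varepsilon)},u_a^{(\varepsilon)}$) converge uniformly to the curves $h_\pm$ of~\eqref{eq: h_pm}, and the same limiting argument yields $A(h_\pm)=a$ and $I_C(h_\pm)=\lim_\varepsilon\mathcal J_A^{(\varepsilon)}(a)=\mathcal J_A(a)$.

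\emph{Main obstacle.} The delicate point is the stability step: proving that the auxiliary functions $E^{(\varepsilon)}$ converge to $E$ \emph{together with their first derivatives}, so that $\alpha_\varepsilon\to\alpha$ (resp. $u_a^{(\varepsilon)}\to u_a$), and that the weighted-arclength parametrizations of the level arcs converge uniformly; this is where the coarea identity~\eqref{eq: coarea} and the convergence $K^{(\varepsilon)}\to K$ in $C^1$ on compacts are essential, and it is precisely here that the threshold $a_{max}$ enters, guaranteeing that the limiting parameter remains finite and positive (for $a\ge a_{max}$ the root would escape to $+\infty$ and the limiting curve would degenerate).
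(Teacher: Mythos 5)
Your proposal takes essentially the same route as the paper: Gaussian smoothing of the increments (you use $X_1+\varepsilon Z$, the paper uses $X_1+\sqrt{\varepsilon}N$; this is immaterial), application of Theorem~\ref{thm: variational} to the regularized distribution, convergence $\alpha_\varepsilon\to\alpha$ (resp.\ $u_a^\varepsilon\to u_a$) via strict concavity of $\sqrt{E}$, and a limiting argument for the energy. The $\limsup/\liminf$ detour through $\mathcal J_A^{(\varepsilon)}(a)$ is more indirect than the paper's one-line chain $I_C(g)\ge(I_\varepsilon)_C(g)\ge(I_\varepsilon)_C(h_\varepsilon)$ followed by Fatou, but it is logically equivalent; and the lower-semicontinuity of $I_C^{(\varepsilon_0)}$ along uniformly convergent curves that you invoke, while phrased as a black box, can indeed be justified via conditional Jensen on dyadic partitions plus Fatou (the same mechanism used in the paper's proof of the convexification lemma), so I do not count it as a gap.

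The genuine gap is the step \emph{``Hence the weighted-arclength parametrizations $g^{(\varepsilon)}_{\alpha_\varepsilon,\ell}$ converge uniformly to $g^{\tau}_{\alpha,\ell}$.''} Hausdorff convergence of the sub-level sets and $\alpha_\varepsilon\to\alpha$ give convergence of the arcs as sets, but say nothing about how the weighted arclength is distributed along the arc, i.e.\ about the parametrization itself. The paper treats this carefully: it reduces pointwise convergence of $g^{(\varepsilon)}_{\alpha_\varepsilon,\ell}$ to pointwise convergence of the angle functions $\varphi_\varepsilon$, then to convergence of their inverses (both functions being strictly monotone with common endpoints), and proves the latter by a second application of the Brunn--Minkowski concavity argument, this time to the partial areas $\sqrt{E^\theta}$ in the cones $R^\theta$. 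You give no substitute for this. An alternative route that would also work (and simultaneously give pointwise convergence of $(h^{(\varepsilon)})'$, letting you replace the $\limsup/\liminf$ argument by Fatou directly) is to view $g^{(\varepsilon)}$ as the solution of the ODE $(g^{(\varepsilon)})'(t)=\lambda^{(\varepsilon)}_{\alpha_\varepsilon,\ell}\,\nabla K^{(\varepsilon)}(g^{(\varepsilon)}(t))$ with $g^{(\varepsilon)}(0)\in\ell\R_+\cap(K^{(\varepsilon)})^{-1}(\alpha_\varepsilon)$, and use continuous dependence of ODE solutions (Gronwall) together with $\lambda^{(\varepsilon)}_{\alpha_\varepsilon,\ell}\to\lambda_{\alpha,\ell}$ and locally uniform convergence $\nabla K^{(\varepsilon)}\to\nabla K$. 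A second, smaller, issue is that your parenthetical justification of $\lambda^{(\varepsilon)}_{\alpha,\ell}\to\lambda_{\alpha,\ell}$ as a direct limit of integrals over moving level curves with moving integrands is also glossed over; the paper sidesteps this entirely by observing that pointwise convergence of the strictly concave functions $\sqrt{E_\varepsilon}$ yields locally uniform convergence of their derivatives (Rockafellar, Theorem~25.7), and identifying $\lambda^{(\varepsilon)}_{\alpha_\varepsilon,\ell}=\sqrt{E_\varepsilon(\alpha_\varepsilon)}\,(\sqrt{E_\varepsilon})'(\alpha_\varepsilon)$ via the coarea identity. You should use that route rather than trying to pass to the limit in the coarea integral directly.
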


Let us give examples to Theorem~\ref{thm: variational} and Proposition~\ref{prop}.

\begin{enumerate}[leftmargin=*, label=\alph*)]
\item Let the distribution of $X_1$ be rotationally invariant. Then $K^{-1}(\alpha)$ is a circle of radius $r_\alpha$ and $|\nabla K|= c_\alpha$ is constant on $K^{-1}(\alpha)$ for each $\alpha>0$. Then $\lambda_{\alpha, \ell}^\tau = \lambda_\alpha =\pi r_\alpha /c_\alpha $, and an optimal curve $h$ parametrizes a half-circle of radius $r_\alpha/\lambda_\alpha= c_\alpha/\pi$ with constant speed $c_\alpha$. From $2a= \pi(c_\alpha/\pi)^2$, we find that $c_\alpha=\sqrt{2 \pi a}$ and $\mathcal J_A(a)=I_C(h)=I_0(c_\alpha)$, where $I_0$ is the radial component of $I$ defined by $I_0(|u|)=I(u)$. This result was proved in~\cite{AkopyanVysotskyProbab}, where it was also extended to linear images of rotationally invariant distributions.

\item Let $X_1$ be Gaussian$(\mu, \Sigma)$ with a non-degenerate $\Sigma$ and $\mu \neq 0$. Let us assume that $\Sigma = Id$; reduction to the general case is explained in ~\cite[Remark~2.14]{AkopyanVysotskyProbab}. Then $K(u)=\frac12 |u|^2 + u \cdot \mu$. Since $K(u)=K(-u)$ only when $u \cdot \mu =0$, the four curves $g_{\alpha, \ell}^\tau$ in \eqref{eq: optimal curve} parametrize the two arcs of $K^{-1}(\alpha)$ lying to the different sides of the line $\mu^\bot \R$ (a subsequent computation will show that the larger arc is not optimal). These curves have a constant speed since $|\nabla K|$ is constant on the circle $K^{-1}(\alpha)$, so it is easy to write $g_{\alpha, \pm \mu^\bot}^\tau$ in the parametric form. It remains to compute the radius of $K^{-1}(\alpha)$ in terms of $a$, which was done in~\cite{AkopyanVysotskyProbab}. It follows that the minimizers $h$ are the constant speed parametrizations of the two $\mu$-axially symmetric circular arcs of radius $\frac{|\mu|}{2 \varphi_a \cos \varphi_a}$ and angle $2 \varphi_a$ starting at the origin and ending on the $\mu$-axis, where $\varphi_a \in (0, \pi/2)$ is the unique solution to 
\[
\frac{2\varphi - \sin 2 \varphi}{8 \varphi^2 \cos^2 \varphi} = \frac{a}{|\mu|^2}, 
\]
and $\mathcal J_A(a)=4 a \varphi_a - \frac12 |\mu|^2 \tan^2 \varphi_a$.

\item If $X_1$ is a degenerate Gaussian, assume that $X_1 \stackrel{d}{=}  (\mu_1, Y)$, where $\mu=(\mu_1, \mu_2)$, $Y$ is a Gaussian$(\mu_2, \sigma^2)$, and $\mu_1$ and $\sigma$ are non-zero constants. Then $K(0,u)= \frac12 \sigma^2 u^2 + \mu_2 u$ for real $u$, and it is easy to find from~\eqref{eq: h_pm} that the optimal trajectories are $h(t)= \mu t \pm 6a \mu_1^{-1}(0,  t^2-t)$ and $\mathcal J_A(a) = 6 a^2 \mu_1^{-2} \sigma^{-2}$. In the case $\mu_1=1$, this describes large deviations for the area of the convex hull of the graph of a one-dimensional random walk with Gaussian increments.  This result was presented in~\cite{AkopyanVysotskyProbab} without a full proof; the corresponding isoperimetric inequality was also obtained by Cygan et.\ al~\cite{Cygan+}.

\item Assume that $X_1 \stackrel{d}{=} (1, Y)$ with $\P(Y=\pm 1)=1/2$, so $(S_n)_{n \ge 1}$ is the graph of a symmetric simple  random walk. Then $K(0, u)=\log (\cosh u)$ and $E'(u)=\int_{-1}^1 s \tanh(us) ds$, hence $a_{max}=1/4$. The solution $u_a$ to $E'(u)=4a$ for $a \in (0, 1/4)$ is not explicit. We can only state that among the minimizers are the curves 
\[
h_{\pm a}(t)=\Big(t,  \frac{1}{ \pm 2 u_a} \log \Big(\frac{\cosh u_a(2t-1)}{\cosh u_a} \Big) \Big),
\] 
and they converge pointwise as $a \to 1/4-$ to the ``triangular'' curves defined by $h_{\pm1/4}(t):= (t, \mp t)$ for $t \in [0,1/2]$ and $h_{\pm1/4}(t):= (t,\pm (t-1))$ for $t \in [1/2,1]$. We have $\mathcal J_A(a)=I_C(h_a)$ for $a \in (0, 1/4)$, and 
$\mathcal J_A(1/4)=I_C(h_{1/4})=\log 2$ by the lower semi-continuity of $\mathcal J_A$ since $A(h_{\pm1/4})=1/4$. 
\end{enumerate}

\medskip

The rest of the paper is organized as follows. The proof of Theorem~\ref{thm: variational}  is based on two auxiliary Lemmas~\ref{lem: convexification} and~\ref{lem: Lagrangian}, presented in the following sections. The first lemma uses a geometric argument, which allows us to  convexify a minimizer in \eqref{eq: rate func I_A}, transforming it into a convex curve while preserving its optimality.  Lemma~\ref{lem: convexification} and its proof are of independent interest, due to their  geometric nature and possible applications to general energy functionals besides $I_C$; see Remark~\ref{rem}. The second lemma uses the results of variational calculus to show that a convex minimizer satisfies the Euler--Lagrange equations. This implies, combined with Lemma~\ref{lem: convexification}, that all minimizers are convex. In the last section we will prove Theorem~\ref{thm: variational}, essentially by solving the Euler--Lagrange equations, and prove Proposition~\ref{prop} by an approximation argument.  The necessary geometric and variational results are given in the Appendix. 

\section{The convexification lemma}

Let us introduce additional notation. Recall that a planar curve $h$ on $[0,1]$ is {\it convex} if its image belongs to the boundary of a convex set and $h(t_1)=h(t_2)$ for $t_1 \le t_2$ is possible only when $t_1=t_2$ or $t_1=0, t_2=1$.  The boundary of the convex hull of a convex  curve consists of the image of the curve and the line segment joining the end points. We say that a curve $h \in AC_0[0,1]$ is a {\it stopping convex curve} if a) its image, denoted by $\im h$, belongs to the boundary of a convex set $C$, and b) for any $0 \le t_1< t_2 \le 1$ such that $\{t_1, t_2\}\neq \{0,1\}$, the equality $h(t_1)=h(t_2)$ implies that $h(t)=h(t_1)$ for $t \in [t_1, t_2]$. If $A(h)>0$ and $h$ parametrizes the arc $\im h $ of $\partial C$ anti-clockwise (resp., clockwise), we say that $h$ has positive (resp., negative) orientation. Note that any stopping convex curve that is constant on no interval is convex. Denote by $AC_0^c[0,1]$ (resp., $AC_0^{sc}[0,1]$) the set of  convex (resp., stopping convex) curves in $AC_0[0,1]$.

For any $h =(h_1, h_2) \in AC_0[0,1]$, define the {\it signed area} enclosed by $h$ to be
\begin{equation} \label{eq:signed area}
\tilde{A}(h):= \frac12 \int_0^1 (h^\bot \cdot h') dt = \frac12 \int_0^1 (h_1 h_2' - h_1' h_2)dt .
\end{equation}
We will use Green's theorem (which in its standard form is stated for $C^1$-smooth closed simple curves) and an approximation argument to show that for stopping convex curves, 
\begin{equation} \label{eq: Green}
A(h) =  |\tilde{A}(h)|, \qquad h \in AC_0^{sc}[0,1].
\end{equation}
If $A(h)>0$, then the sign of $\tilde{A}(h)$ is the orientation of $h$. The meaning of the signed area for non-convex curves is clarified in Lemma~\ref{lem: signed} in the Appendix.
We will prove~\eqref{eq: Green} at Step 2 in the proof of Lemma~\ref{lem: convexification}. 


\begin{lem} 
\label{lem: convexification}
For any  $h \in AC_0[0,1] $ satisfying $A(h)>0$ and $I_C(h)<\infty$, there exists a convex curve $g \in AC_0^c [0,1]$ such that 
\begin{equation} \label{eq: g ineq}
A(h) \le A(g), \qquad |\tilde A(h)| \le A(g), \qquad I_C(g) \le I_C(h).
\end{equation}
Moreover, if $h$ has the property of being affine and non-constant on some non-empty subinterval of $[0,1]$, then $g$ can be chosen to satisfy the same property.
\end{lem}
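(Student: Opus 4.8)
The plan is to reduce the general curve $h$ to a convex one by a two-stage procedure: first replace $h$ by a ``monotone rearrangement'' of its derivative that preserves the rate functional and does not decrease the enclosed area, and then pass from the rearranged curve to the boundary of its own convex hull. For the rearrangement stage, I would work with the direction and the speed of $h$ separately. Write $h'(t) = r(t) \,\theta(t)$ where $\theta(t) \in \S$ is the direction and $r(t) = |h'(t)| \ge 0$; the idea is to reorder the infinitesimal increments $h'(t)\,dt$ so that their directions are non-decreasing in the angular sense (i.e.\ sweep the circle monotonically). Such an angular rearrangement leaves $I_C(h) = \int_0^1 I(h'(t))\,dt$ unchanged, since it is an integral of a fixed integrand against the pushed-forward occupation measure of $h'$, which does not change under reparametrization of the ``label space''; the endpoint $h(1) = \int_0^1 h'(t)\,dt$ is also unchanged since it is the total vector sum of the increments. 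The classical fact that such a monotone angular rearrangement does not decrease the area enclosed (and in fact produces a convex curve when the directions are traversed monotonically through a total angle of at most $2\pi$) is the rearrangement inequality alluded to in the keywords; I would cite or reprove the relevant geometric statement from the Appendix. One has to be slightly careful: if the total turning exceeds $2\pi$ one must cap it, but this only helps the area. After this step $h$ is replaced by a curve $\tilde h$ that is already convex (or stopping convex), with $I_C(\tilde h) = I_C(h)$, $\tilde A(\tilde h) \ge \tilde A(h)$ (up to orientation), hence $A(\tilde h) = |\tilde A(\tilde h)| \ge |\tilde A(h)|$ by~\eqref{eq: Green}, and also $A(\tilde h) \ge A(h)$ since the convex hull of the rearranged curve contains a translate of relevant pieces — this last inequality I would get directly from $A(\tilde h) = |\tilde A(\tilde h)| \ge A(h)$, using that $A(h) \le |\tilde A(h)|$ would be false in general, so instead I would argue that the rearrangement can be arranged to sweep out at least the area $A(h)$ of the original convex hull; alternatively, pass to convex hulls first as a safety net.

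For the second stage, given the (stopping) convex curve $\tilde h$, let $g$ parametrize the arc of $\partial \conv(\im \tilde h)$ consisting of $\partial \conv(\im \tilde h)$ minus the segment joining its endpoints, with an appropriate absolutely continuous parametrization and $g(0) = 0$. Then $\im g \subseteq \partial \conv(\im \tilde h)$, so $A(g) = A(\tilde h)$, while $g$ is genuinely convex by construction. The rate functional does not increase, $I_C(g) \le I_C(\tilde h)$: this is because the arc of the convex hull boundary is obtained from $\tilde h$ by discarding the ``concave dents,'' and discarding a sub-arc and replacing it by the chord can only decrease $\int I(h')\,dt$ by Jensen's inequality applied to the convex function $I$ (the chord increment is the average of the discarded increments, so $I$ of the average is at most the average of $I$). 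I would make this precise by the standard observation that the convex-hull boundary arc is the graph, in suitable rotated coordinates, of the convex minorant of the original curve, and that passing to a convex minorant is exactly a sequence of such chord replacements. Combining the two stages gives~\eqref{eq: g ineq}.

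For the final ``moreover'' clause: if $h$ is affine and non-constant on some subinterval $[s_1,s_2]$, then on that interval $h'$ is a fixed non-zero vector $v$; in the angular rearrangement this block of increments all points in direction $v/|v|$, so they stay contiguous and $\tilde h$ is still affine and non-constant on the corresponding interval. In the convex-hull step, the only way this affine piece is destroyed is if it lies in the interior of $\conv(\im \tilde h)$ or gets absorbed into a longer affine piece of $\partial \conv(\im \tilde h)$; in either case $g$ still contains an affine non-constant sub-arc (the longer segment, or we simply note the chord replacing a region containing this segment is itself affine and non-constant). So $g$ can be chosen with the same property, possibly after a harmless reparametrization to restore $g(0)=0$ and to make the affine piece traversed at constant speed.

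The main obstacle I expect is making the angular rearrangement rigorous at the level of absolutely continuous curves rather than polygonal ones: one must show the rearranged object is again in $AC_0[0,1]$ (not just a measure on directions), that it is parametrized so that it is convex in the precise sense defined before Lemma~\ref{lem: convexification} (constant on no nontrivial interval except possibly $\{0,1\}$, or at least stopping convex), and that the area genuinely does not decrease — the clean inequality is $A(\tilde h) \ge A(h)$, and the honest justification is that the Steiner-type symmetrization/rearrangement of the increments produces a convex curve whose convex hull contains a translate of $\conv(\im h)$, or via the isoperimetric-type identity $A(\tilde h) = \tilde A(\tilde h)$ together with a direct comparison $\tilde A(\tilde h) \ge A(h)$. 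I would handle the absolute-continuity bookkeeping by defining the rearrangement through the monotone rearrangement of the $\S$-valued map $\theta$ with respect to arclength measure $r(t)\,dt$, then reparametrizing back to $[0,1]$, and checking absolute continuity via the $I_C(h) < \infty$ hypothesis, which forces the requisite integrability.
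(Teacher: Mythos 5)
Your proposal is in the same spirit as the paper's proof---both hinge on rearranging the increments of $h$ by angle---but as written it has two real gaps that the paper works hard to close.

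First, the angular rearrangement on an absolutely continuous curve is not made rigorous, and you flag this yourself. The measure-theoretic ``push-forward the occupation measure of $h'$'' description does give a candidate object, but it does not by itself tell you that the rearranged increment field integrates to an $AC_0[0,1]$ curve, nor that the resulting curve is (stopping) convex, nor that its convex hull contains a set of area at least $A(h)$. The paper's route is to discretize via dyadic piecewise-linear approximations $h_k$ (which form a martingale so that $h_k' \to h'$ in $L^1$ and $I_C(h_k)\to I_C(h)$ by conditional Jensen plus Fatou), apply the discrete rearrangement (Pach's area inequality and the F\'ary--Makai signed-area inequality, Proposition~\ref{prop:pach theorem} and Lemma~\ref{lem:fary-convex}), and then pass to a weak $L^1$ limit using uniform integrability and lower semicontinuity of $u\mapsto\int I(u)$. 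This approximation scaffolding is precisely what turns your heuristic rearrangement into a theorem; without it the key inequalities $A(\tilde h)\ge A(h)$ and $|\tilde A(\tilde h)|\ge|\tilde A(h)|$ are not established.

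Second, and more subtly, your stage~2 targets the wrong obstacle. After the rearrangement one obtains a \emph{stopping} convex curve $h^c$: its image already lies on the boundary of its convex hull, so there are no ``concave dents'' to cut off by chords, and Jensen-via-chord-replacement has nothing to act on. What actually prevents $h^c$ from being convex in the lemma's sense is that it may \emph{pause} on subintervals. Removing those constancy intervals by a time-change leaves a curve defined on $[0,L(1)]$ with $L(1)<1$, and reparametrizing it back to $[0,1]$ is not energetically free in general because $I(0)>0$ whenever $\mu\ne 0$. The paper therefore splits into cases: if $\mu=0$ then $I(0)=0$ and a linear time-rescaling plus convexity of $I$ does the job; if $\mu\ne 0$ one instead fills the missing time with a segment in direction $\mu$ (zero energy cost since $I(\mu)=0$) and then re-convexifies explicitly. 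Your proposal contains neither this case distinction nor any substitute for it, so the inequality $I_C(g)\le I_C(h^c)$ is not justified. The ``moreover'' clause would then follow by tracking the affine block through the dyadic approximations and limits, as the paper does, rather than by the informal absorption argument you sketch.
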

\begin{remark} \label{rem}
The assertions of Lemma~\ref{lem: convexification} remain valid if $I_C$ is replaced by a functional $J_C(h):=\int_0^1 J(h'(t))dt$, defined for $h \in AC[0,1]$, where $J:\R^2 \to [0, \infty]$ is any lower semicontinuous convex function that attains its minimum. These are the only properties of $I$ used in the proof, and they are satisfied by $I$ under no assumptions on $X_1$.
\end{remark}

Our proof rests on the geometric procedure of {\it convexification} by B{\"o}r{\"o}czky et al.~\cite{boroczky1986maximal}, F{\'a}ry and Makai~\cite{fary1982isoperimetry}, and Pach~\cite{pach1978isoperimetric}. The novelty of Lemma~\ref{lem: convexification} is that it combines the inequalities for the areas with the energy estimate on $I_C$, while these authors considered the length functionals $J_C$ with homogeneous $J$. There are many other rearrangement-based transformations, for example Steiner's symmetrization, resulting in similar inequalities; see~\cite{Burchard}.  

\begin{proof}
The geometric procedure of convexification, described in the Appendix, transforms a polygonal line into a convex polygonal line. In terms of curves, it corresponds to transforming a piecewise linear planar curve into a (piecewise linear) stopping convex curve. This transformation does not change the energy of the curve and does not decrease the area and the signed area of its convex hull. We will combine these properties with the continuity properties of $A$, $\tilde A$, $I_C$ to show that the stopping convex curve $h^c$, obtained as the limit of convexifications of piecewise linear approximations of a curve $h \in AC_0[0,1]$, satisfies 
\begin{equation} \label{eq: h^c ineq}
A(h) \le A(h^c), \quad |\tilde A(h)| \le A(h^c), \quad I_C(h^c) \le I_C(h).
\end{equation} 
We will elaborate on the approximation argument since the geometric papers \cite{boroczky1986maximal, fary1982isoperimetry, pach1978isoperimetric} do not provide details. To finish the proof, we will transform $h^c$ (which we call a {\it convexification} of $h$) into a convex curve $g$ required\footnote{Existence of $h^c$ is already sufficient to prove Lemma~\ref{lem: Lagrangian}, which implies our main result, Theorem~\ref{thm: variational}. From this perspective Lemma~\ref{lem: convexification} is excessive. We gave it this stronger form for aesthetic reasons, to avoid using the artificial notion of stopping convex curves in the statement.}.

Step 1. Construction of a stopping convex curve $h^c$ that satisfies \eqref{eq: h^c ineq}.

Consider $\mathcal{D}_k:= \{[1-1/2^k,1] \} \cup \{[i/2^k, (i+1)/2^k )\}_{i=0}^{2^k-2}$, the sequence of increasing  partitions of $[0,1]$ into the dyadic intervals, for integer $k \ge 1$. Let $h_k$ be the piecewise linear functions on $[0,1]$ such that $h_k(0)=0$, $h_k(1)=h(1)$, and 
\[
h_k'(t) = 2^ k \sum_{i =0}^{2^k-1} \Bigl[h \Bigl( \frac{i+1}{2^k}\Bigr) -  h \Bigl( \frac{i}{2^k}\Bigr) \Bigr]  \I_{[i/2^k, (i+1)/2^k )}(t), \quad t \in (0,1).
\]
In other words, $h_k$ is the continuous function on $[0,1]$ defined by the linear interpolation between its values at the points $i/2^k$, where it coincides with $h$. Put $\mathcal{D}_\infty:= \cup_{k=1}^\infty \mathcal D_k$, and 
denote by $P$ the Lebesgue measure restricted to $\sigma(\mathcal{D}_\infty)$ and by $E$ the expectation with respect to $P$ regarded as a probability measure. Let us agree that in this proof, $L^1$ and `a.e.' refer to $P$. We have $h' \in L^1$, and it is easy to see that the sequence $(h_k')_{k \ge 1}$ is a martingale with respect to the filtration $\{\sigma(\mathcal{D}_k)\}_{k \ge 1}$ on the probability space $([0,1], \sigma(\mathcal{D}_\infty), P)$, and that $h_k'=E(h'| \sigma(\mathcal{D}_k))$ a.e. By L{\'e}vy's martingale convergence theorem (see Williams~\cite[Theorem~14.2]{Williams}), we have $h_k' \to h'$ a.e.\ and in $L^1$ as $k \to \infty$ and, moreover, the sequence $h_k'$ is uniformly integrable. Then $|h(t) - h_k(t)| \le \|h-h_k'\|_{L^1} \to 0$ for $t \in [0,1]$, and thus $h_k \to h$ uniformly. 

We claim that
\begin{equation} \label{eq: continuity}
A(h_k) \to A(h), \quad \tilde A(h_k) \to \tilde A(h), \quad I_C(h_k) \to I_C(h)
\end{equation}
as $k \to \infty$. The first assertion follows from Steiner's formula (see e.g.~\cite[Eq.~(A.3)]{AkopyanVysotskyProbab}) and the convergence $\conv(\im(h_k)) \to \conv(\im(h))$ in the Hausdorff distance, which itself follows from the uniform convergence $h_k \to h$. The second assertion in \eqref{eq: continuity} follows from
\begin{align} \label{eq: signed area -}
\Big | \tilde A(h)  - \tilde A(h_k) \Big | &= \frac12 \left |E (h^\bot \cdot h' - h_k^\bot \cdot h_k' )  \right | \notag \\
&\le  \left | E [ h^\bot \cdot ( h' - h_k')]  \right | + \left | E [(h^\bot - h_k^\bot ) \cdot h_k'] \right | \\
&\le {\| h \|}_\infty {\| h' - h_k'\|}_{L^1} + {\|h - h_k \|}_\infty {\| h_k'\|}_{L^1} \to 0. \notag
\end{align}
Furthermore, since $I$ is convex and $I_C(h)=\|I(h')\|_{L^1} <\infty$ by assumption of the lemma, by Jensen's inequality for conditional expectations, we have $I(h_k') \le E(I(h')|\sigma(\mathcal D_k))$ a.e.\ for every $k \ge 1$. By taking expectation, this gives $I_C(h_k)=E I(h_k') \le E[E(I(h')|\sigma(\mathcal D_k))] = I_C(h)$. On the other hand, 
by Fatou's lemma, lower semicontinuity of $I$, and the fact that $h_k' \to h'$ a.e.\ as $k \to \infty$,  we have 
$\liminf_k E I(h_k') \ge  E [\liminf_k I(h_k')] \ge E I(h')$, hence $\liminf_k I_C(h_k) \ge I_C(h)$, and thus $I_C(h_k) \to I_C(h)$ as claimed.   

Let $(h_k^c)_{ k \ge 1}$ be the piecewise linear functions that satisfy $h_k^c(0)=0$, $h_k^c(1)=h(1)$,  and 
\[
(h_k^c)'(t) = 2^ k \sum_{i =0}^{2^k-1} \Bigl[h \Bigl( \frac{\sigma_k(i)+1}{2^k}\Bigr) -  h \Bigl( \frac{\sigma_k(i)}{2^k}\Bigr) \Bigr]  \I_{[i/2^k, (i+1)/2^k )}(t), \quad t \in (0,1),
\]
where $\sigma_k$ is the unique permutation of the set $\{0, 1, \ldots, 2^k-1\}$ that arranges the vectors $2^k[h((\sigma_k(i)+1)/2^k) - h((\sigma_k(i))/2^k)]$, $i \in \{0, \ldots, 2^k-1\}$, in the following lexicographic order. First, order by increase of the angle (measured in the clockwise direction, with the convention that for the zero vector, the angle is $0$) between the vector and $h(1)$ if $h(1) \neq 0$ and any fixed direction if $h(1)=0$. Second, order by increase of norm, and third, order by increase of index $i$. 

By the construction, each $h_k^c$ is a stopping convex curve, and its image $\im(h_k^c)$ is a polygonal line, which is a convexification of the polygonal line $\im(h_k)$. Then 
\begin{equation} \label{eq: ineq convexif}
A(h_k^c) \ge A(h_k), \quad |\tilde A(h_k^c)| \ge |\tilde A(h_k)|, \quad I_C(h_k^c) = I_C(h_k),
\end{equation}
where the inequalities follow from Lemmas~\ref{lem:fary-convex},~\ref{lem: signed} and Proposition~\ref{prop:pach theorem} in the Appendix, and the equality holds true because $(h_k^c)'$ is constructed from $h_k'$ by reordering the intervals of constancy. 

By the same reasoning of reordering, the sequence $(h_k^c)'$ is uniformly integrable because so is the sequence $h_k'$. 
By a sequential weak compactness criterion in $L^1[0,1]$ (see Buttazzo et al.~\cite[Theorem~2.12]{Buttazzo+}), there exist a strictly increasing subsequence $(k_i)_{i \ge 1} \subset \N$ and an element of $AC_0[0,1]$, which we denote by $h^c$ and call a  convexification of $h$, such that $(h_{k_i}^c)' \to (h^c)'$ weakly in $L^1$ as $i \to \infty$. That is, $E [(h_{k_i}^c)' f] \to E[ (h^c)' f]$ for every $f \in L^\infty$. Hence $h_{k_i}^c \to h^c$ pointwise on $[0,1]$. Moreover, w.l.o.g.\ we can assume that this convergence is uniform:  $(h_k^c)_{k \ge 1}$ is equiabsolutely continuous by uniformly integrability of the sequence $(h_k^c)'$ (\cite[Theorem~2.12]{Buttazzo+}), hence $(h_k^c)_{k \ge 1}$ is equicontinuous and therefore equibounded by $h_k^c(0)=0$, and we can apply the Arzel{\'a}--Ascoli theorem. 

We claim that
\begin{equation} \label{eq: continuity h^c}
\lim_{i \to \infty}  A(h_{k_i}^c) = A(h^c), \quad \lim_{i \to \infty} \tilde A(h_{k_i}^c) = \tilde A(h^c), \quad \liminf_{i \to \infty} I_C(h_{k_i}^c) \ge I_C(h^c).
\end{equation}
We argue as in the proof of~\eqref{eq: continuity}. The first equality follows from Steiner's formula since $h_{k_i}^c \to h^c$ uniformly on $[0,1]$. The second one follows from \eqref{eq: signed area -} with $h$ and $h_k$, substituted respectively, by $h^c$ and $h_{k_i}^c$, once we notice that the first term in \eqref{eq: signed area -}  vanishes in the limit since $(h_{k_i}^c)' \to (h^c)'$ weakly in $L^1$. Lastly, 
by \cite[Theorem~3.6]{Buttazzo+}, the convexity and lower semi-continuity of $I$ imply that the functional $u \mapsto \int_0^1 I(u(t)) dt$, defined for $u \in L^1$, is sequentially lower semi-continuous in the weak topology on $L^1$. This implies the last assertion in \eqref{eq: continuity h^c}. 

Putting everything together, we see that inequalities \eqref{eq: h^c ineq} follow from \eqref{eq: continuity}, \eqref{eq: ineq convexif}, and~\eqref{eq: continuity h^c}.

We now prove that $h^c$ is a stopping convex curve. We claim that  $\im(h^c)$ is contained in the boundary of its convex hull, which is genuinely two-dimensional since $0 < A(h) \le A(h^c)$. In fact, if our claim does not hold true, then $h^c(t_0) \in \intr(\conv(\im(h^c)))$ for some $t_0 \in [0,1]$. By Carath\'eodory's theorem, there exist distinct $t_1, t_2, t_3 \in [0,1]$ such that $h^c(t_0)$ belongs to the interior of the triangle with vertices $h^c(t_1)$, $h^c(t_2)$, $h^c(t_3)$. Then $h_{k_i}^c(t_0)$ belongs to the interior of the triangle with vertices $h_{k_i}^c(t_1)$, $h_{k_i}^c(t_2)$, $h_{k_i}^c(t_3)$ for all integer $i$ large enough, which is a contradiction because $h_{k_i}^c$ is a stopping convex curve. 

Next we show that $h^c$ moves along the boundary monotonically. Since $\conv(\im(h^c))$ is convex and two-dimensional, we can pick a non-zero point $O$ from the interior of this set. Then $O$ is an interior point of $\conv(\im(h^c_{k_i}))$ for all integer $i$ that are large enough. Consider the function on $[0,1]$, denoted by $\theta$, defined as the angle between $-O$ and $h^c(t)-O$ measured in the clockwise direction, so that $\theta(0)=0$. Similarly, define the angles $\theta_i(t)$ between $-O$ and $h^c_{k_i}(t)-O$. Since each $h^c_{k_i}$ is a stopping convex curve, each $\theta_i$ is non-decreasing on $[0,1]$. Then so is $\theta$, hence $h^c$ is a stopping convex curve, as claimed.

Step 2. Let us prove extended Green's formula~\eqref{eq: Green}.

If $h$ is a stopping convex curve in $AC_0[0,1]$, then so is every $h_k$. Extend it by putting $\bar h_k(t):=h_k(t)$ for $t \in [0,1]$ and $\bar h_k(t):=h_k(1)(2-t)$ for $t \in (1,2]$.  Since $\bar h_k$ may be constant on some intervals, in order to apply the usual Green's formula we shall transform $\bar h_k$ into a simple curve by a time-change that removes these intervals. Define $L_k(t):=\int_0^t \I_{\{\bar  h_k'(s) \neq 0\}}ds$ for $t \in [0,2]$; then $L_k(1)>0$ for all $k$ large enough if we exclude the trivial case where $h$ is constant. Consider the left-continuous inverse of $L_k$, defined by $L_k^{-1}(t):=\inf\{s\ge 0: L_k(s) \ge t\}$ for $t \in (0, L(2)]$ and $L_k^{-1}(0):=0$. This is a piece-wise linear strictly increasing function with jumps. Put $f_k:= \bar h_k \circ L_k^{-1}$, then $f_k([0,L(2)])=\bar  h_k([0,2])$ and $f_k$ is a piece-wise linear closed simple convex curve. Therefore, we can apply Green's formula, which gives
\[
A(h_k) = A(\bar h_k)= A( f_k) = \int_0^{L(2)} (f_k^\bot \cdot  f_k' ) dt = \int_0^2 (\bar h_k^\bot \cdot \bar h_k') dt = \int_0^1 (h_k^\bot \cdot h_k') dt =\tilde A(h_k).
\]
Finally, we obtain $\tilde A(h)= A(h)$ by \eqref{eq: continuity}, as required.

Step 3. Transformation of $h^c$ into a convex curve. 

a) The curve $h^c$ may not be convex only because it can be constant on some subintervals of $[0,1]$. We will remove them by an appropriate time-change, as in Step 2. Consider the set
\[
D:=\big \{t \in (0,1): h^c(t) \equiv  const  \text { on } (t-\varepsilon, t+\varepsilon) \text{ for some } \varepsilon>0\big \},
\]
which is either empty or consists of at most countable number of disjoint non-adjacent open intervals. Put $D^c:=[0,1] \setminus D$, and define the function $L(t):=\int_0^t \I_{D^c}(s) ds$ on $[0,1]$, which satisfies $L(1)>0$. Its left-continuous inverse $L^{-1}$, given by $L^{-1}(t):=\inf\{s\ge 0: L(s) \ge t\}$ for $t >0$ and $L^{-1}(0):=0$, is a strictly increasing function from $[0, L(1)]$ to $[0,1]$. It is measurable as an increasing function. 

We claim that 
\begin{equation} \label{eq: pushforward}
P \circ (L^{-1})^{-1} = \I_{D^c}(t) P(dt);
\end{equation}
in words, the pushforward of the Lebesgue measure $P$ on $[0,1]$ under the mapping $L^{-1}$ is the measure on $[0,1]$ with density $ \I_{D^c}$ with respect to $P$. Indeed, for every $s \in [0,1]$,
\[
P \circ (L^{-1})^{-1}([0,s]) = P(\{t \in [0,L(1)]: L^{-1}(t) \in [0,s]\}) = P([0, L(s)])=L(s),
\]
where in the second equality we used monotonicity of $L$ and the fact that $L(L^{-1}(t))=t$ by continuity of $L$. Thus, $L$ is the distribution function of the measure $P \circ (L^{-1})^{-1}$, which proves~\eqref{eq: pushforward}.

Consider the function $h^c \circ L^{-1}$ on $[0, L(1)]$. It is convex since it is constant on no interval by construction, and it has the same image as $h$. Let us show that it is absolutely continuous. For any $t \in [0, L(1)]$, we have
\begin{align} \label{eq: variable change}
h^c(L^{-1}(t))&= \int_0^{L^{-1}(t)} (h^c)'(s) ds = \int_0^1 \big[ \I_{[0, L^{-1}(t)]}(s)  (h^c)'(s) \big]\I_{D^c}(s) ds  \notag \\
&=\int_0^1  \big[ \I_{[0, L^{-1}(t)]}(L^{-1}(u))  (h^c)'(L^{-1}(u))  \big] du  = \int_0^t (h^c)' \circ L^{-1} du,
\end{align}
where in the second equality we used the change of variable formula combined with \eqref{eq: pushforward}, and in the third equality we used that $\I_{[0, L^{-1}(t)]}(L^{-1}(u)) = \I_{[0,t]}(u)$ by strict monotonicity of $L^{-1}$. By the same argument as in~\eqref{eq: variable change}, we see that $\int_0^1 |(h^c)'| ds =\int_0^{L(1)} |(h^c)' \circ L^{-1}| du < \infty$. Thus, the function $(h^c)' \circ L^{-1}$ is integrable, and from~\eqref{eq: variable change} and Lebesgue's differentiation theorem it follows that $h^c \circ L^{-1}$ is absolutely continuous and $(h^c \circ L^{-1})'=(h^c)' \circ L^{-1}$ a.e.\ on $[0, L(1)]$. This equality implies, by the same argument as in \eqref{eq: variable change}, that
\begin{equation} \label{eq: energy change}
I_C(h^c) = \int_0^{L(1)}I \big(h^c(L^{-1}(u))' \big) du + (1- L(1)) I(0).
\end{equation}

b) We now use the convex function $h^c \circ L^{-1}$ to construct a required convex function $g$ on the whole of $[0,1]$. We will do this differently in two cases. Recall that we made no assumptions on $X_1$. If $X_1$ is integrable, we have $\mu = \E X_1$. Otherwise, let $\mu \in \R^2$ be the unique vector such that for every $u \in \R^2$, it is true that $\E (u \cdot X_1) = u \cdot \mu_1$ if $ u \cdot X_1$ integrable and $u \cdot \mu =0$ otherwise. For example, $\mu =0$ if $ u \cdot X_1$ integrable only when $u=0$. We always have $I(\mu)=0$ by Jensen's inequality, and $I(u) \ge 0$ for all $u \in \R^2$ by $K(0)=0$. 

Case 1: $\mu \neq 0$. Consider the absolutely continuous function  on $[0,1]$ defined by $g_1 := h^c \circ L^{-1}$ on $[0, L(1)]$ and $g_1(t) := h(1) + (t-L(1)) \mu $ on $[L(1), 1]$, where  $h^c \circ L^{-1}(L(1)) = h(1)$. By $\im(h^c) = \im (h^c \circ L^{-1}) \subset \im(g_1)$ and \eqref{eq: energy change}, we have
\begin{equation} \label{eq: h^c < g} 
A(h^c)  \le A(g_1), \qquad I_C(g_1) \le I_C(h^c).
\end{equation}

Since $g_1$ may be non-convex, we need to convexify it. Unlike for Step 1, we can do this explicitly in two steps as follows; see also Figure~\ref{fig: convexification of convexififcation}. First pick a point $t_0 \in [0,L(1)]$ such that $h^c(t_0)$ belongs to one of the two support lines to $\conv(\im(h^c))$ that are parallel to $\mu$ and chosen such that the curve 
\begin{equation*} \label{eq: locally convex h}
g_2(t):=\left\{
                 \begin{array}{ll}
                  h^c(L^{-1}(t)),  & t \in [0, t_0] \\
                  h^c(L^{-1}(t_0))+(t-t_0)\mu, & t\in[t_0, t_0+1-L(1)]\\
				  h^c(L^{-1}(t-1+L(1)))+(1-L(1))\mu, & t\in[t_0+1-L(1), 1]
                 \end{array}
               \right.
\end{equation*}
is convex on $[0, t_0+1-L(1)]$; see Figure~\ref{fig: convexification of convexififcation}.b. For example, if the curve $h^c(t_0)$ is smooth, then either $t_0 \in \{0,L(1)\}$ or $t_0$ is a point where $(h^c)'(t_0)$ is directed along~$\mu$. The image of $g_2$ is obtained by ``inserting'' the segment $\{(1-L(1))s\mu: 0 \le s \le 1\}$ into the image of $h^c\circ L^{-1}$ at the point corresponding to time $t_0$.

\begin{center}
	\parbox{5cm}{
	\begin{center}
		\includegraphics{fig-poly-501.mps}
		
		a) $g_1$
	\end{center}
	}
	\hskip 0.3cm
	\parbox{5cm}{
	\begin{center}
		\includegraphics{fig-poly-502.mps}
		
		b) $g_2$
	\end{center}
	}
	\hskip 0.3cm
	\parbox{5cm}{
	\begin{center}
		\includegraphics{fig-poly-503.mps}
		
		c) $g$
	\end{center}
	}
	\f \label{fig: convexification of convexififcation} Convexification of $g_1$. The values shown represent time.
\end{center}


The second step is similar to the previous one, for $- g_2(1)$ instead of $\mu$.
Find a point $t_1 \in [0,1]$ such that the support line to $\conv(\im(g_2))$ at $g_2(t_1)$ is parallel to $g_2(1)$ and $g_2(t_1)$ is directed along  $-g_2(1)$. Move the part of the curve $g_2$ after $t_1$ to the beginning; see Figure~\ref{fig: convexification of convexififcation}.c. This gives a new curve $g$, which is convex and absolutely continuous. We can say that $g$ is a convexification of $g_1$ (and of $g_2$). From  Proposition~\ref{prop:pach theorem} it follows by a discretization argument as in Step 1 that $A(g_1) \le A(g)$ and $I_C(g) = I_C(g_1)$. Then $A(h^c) \le A(g)$ and $I_C(g) \le I_C(h^c)$ by \eqref{eq: h^c < g}. Combined with~\eqref{eq: Green} applied to the stopping convex curves $h^c$ and $g$, the former inequality implies that $|\tilde A(h^c)| \le |\tilde A(g)|$. Combining all these inequalities with the ones in~\eqref{eq: h^c ineq} for $h^c$,  we see that $g$ satisfies inequalities~\eqref{eq: g ineq}, as required. 

Case 2: $\mu =0$. Put $g(t):=h^c \circ L^{-1}(L(1)t)$ for  $t \in [0,1]$. By the change of variables $u = L(1) t$ in~\eqref{eq: energy change} and convexity of $I$,
\[
I_C(h^c) = L(1) \int_0^1 I \big([h^c \circ L^{-1}(L(1)t)]' /L(1) \big) dt \ge  \int_0^1 I \big([h^c \circ L^{-1}(L(1)t)]' \big) dt = I_C(g).
\]
By $\im(g)=\im(h^c)$, we  have $A(g)=A(h^c)$, and also $|\tilde A(g)|=|\tilde A(h^c)|$ by \eqref{eq: Green}. Therefore, $g$ satisfies~\eqref{eq: g ineq}, as required. 

Step 3. It remains to prove the last assertion of the lemma. If $h' \equiv u$ a.e.\ on a non-empty open interval $G \subset [0,1]$ and a non-zero $u \in \R^2$, then for every approximating step function $h_k$, we have  $h_k' \equiv u$ a.e.\ on the interval $G_k$ that is the union of all intervals in $\mathcal D_k$ (i.e.\ the dyadic intervals of length $2^{-k}$)  contained in $G$. Then 
$h_k^c \equiv u$ on a certain translation $H_k$ of the interval $G_k$. Choose an increasing subsequence $k_i$ of $\N$ such that the centres of the intervals $H_{k_i}$ converge as $i \to \infty$. Let $H$ be the open interval of the same length as $G$ centred at the limit of the centres. We have $(h^c)' \equiv u$ a.e.\ on $H$. If $\mu =0$, then $g' \equiv L(1) u$ a.e.\ on $H/L(1)$, otherwise $g'\equiv u$ a.e.\ on a translation of $H$ by construction of $g$ from $h^c$.
\end{proof}

\section{The Euler--Lagrange lemma}
We now state the second auxiliary result needed to prove Theorem~\ref{thm: variational}.

\begin{lem} \label{lem: Lagrangian}
Suppose that $\E e^{u \cdot X_1}< \infty$ for every $u\in \R^2$. 

1. Assume that $\conv( \supp(X_1)) = \R^2$. Then for any $a>0$, 
every minimizer $h$ in \eqref{eq: rate func I_A} is a $C^1$-smooth convex curve that satisfies the Euler--Lagrange equation
\begin{equation} \label{eq: EL main}
\begin{cases}
\lambda h^\bot(t) = \nabla I(h'(t)) - \nabla I(h'(0)) , \quad t \in [0, 1], \\
\nabla I(h'(1)) = -\nabla I(h'(0)),  
\end{cases}
\end{equation}
for some non-zero real $\lambda$.

2. Assume that $\conv( \supp(X_1)) = \{\mu_1\} \times \R$ for a real $\mu_1 \neq 0$. Then for any $a>0$, every minimizer $h=(h_1, h_2)$ in \eqref{eq: rate func I_A} is a $C^1$-smooth convex curve of the form $h(t)=(\mu_1 t, h_2(t))$ such that $h_2$ satisfies the Euler--Lagrange equation
\begin{equation} \label{eq: EL main 1D}
\begin{cases}
\lambda \mu_1 t =I_2'(h_2'(t)) -  I_2' (h_2'(0)), \quad t \in [0, 1], \\
I_2' (h_2'(1)) = -I_2'(h_2'(0)), 
\end{cases}
\end{equation}
for some non-zero real $\lambda$, where $I_2(v):=I(\mu_1, v)$ for $v \in \R$.
\end{lem}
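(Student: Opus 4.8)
The plan is to combine the convexification lemma (Lemma~\ref{lem: convexification}) with a classical result from the calculus of variations guaranteeing that a minimizer is $C^1$ and satisfies the Euler--Lagrange equation, which is stated as Theorem~\ref{thm: Cesari} in the Appendix. First I would fix $a>0$ and a minimizer $h$ in \eqref{eq: rate func I_A}; note $I_C(h)=\mathcal J_A(a)<\infty$ and $A(h)=a>0$. By Lemma~\ref{lem: convexification} there is a convex curve $g \in AC_0^c[0,1]$ with $A(g)\ge A(h)=a$, $I_C(g)\le I_C(h)$, and $|\tilde A(g)|\le A(g)$. Since $g$ is convex, extended Green's formula \eqref{eq: Green} gives $A(g)=|\tilde A(g)|$. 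The subtle point is that we need a \emph{convex} competitor with $A(g)$ exactly $a$, not merely $\ge a$; I would rescale $g$ by a factor $c\in(0,1]$ about the origin if necessary, i.e. replace $g$ by $cg$ where $c^2 A(g)=a$; then $I_C(cg)=\int_0^1 I(cg')\,dt$. Here one must use convexity of $I$ together with $I(\mu)=0$ (i.e. $0\in\partial I$-argument, or simply $I(cv)=I(cv+(1-c)\mu)\le cI(v)+(1-c)I(\mu)=cI(v)\le I(v)$) to conclude $I_C(cg)\le I_C(g)\le I_C(h)=\mathcal J_A(a)$, so $cg$ is itself a minimizer, convex, with signed area $\pm a$. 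Thus WLOG the minimizer $h$ may be taken convex with $\tilde A(h)=\pm a$; reversing orientation if needed we may assume $\tilde A(h)=a$, so the constraint becomes the linear-in-$(h,h')$ integral constraint $\frac12\int_0^1(h_1h_2'-h_1'h_2)\,dt=a$.

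Next I would set up the constrained variational problem in the form covered by Theorem~\ref{thm: Cesari}: minimize $\int_0^1 I(h'(t))\,dt$ over $AC_0[0,1]$ subject to the isoperimetric constraint $\int_0^1 \phi(h(t),h'(t))\,dt = 2a$ with $\phi(x,p)=x_1p_2-x_2p_1$, and with the endpoint $h(1)$ free. The hypotheses to check are: $I$ is finite, convex, and $C^\infty$ on all of $\R^2$ (this is exactly where $\conv(\supp(X_1))=\R^2$ enters — it forces $K$ to be finite and $I=K^*$ to be finite and strictly convex and smooth everywhere, by standard Legendre-transform duality and the fact that $K$ is steep/co-finite; cite \cite[Thm~8.7, §26]{Rockafellar}); $I$ has superlinear growth (again from finiteness of $K$ everywhere, so $K^*=I$ grows faster than any linear function); and the Lagrangian is regular in the sense required. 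The Lagrange multiplier theorem then yields a multiplier $\lambda\in\R$ and the Euler--Lagrange system for $L(x,p)=I(p)-\lambda\phi(x,p)=I(p)-\lambda(x_1p_2-x_2p_1)$. Computing $\partial_x L$ and $\partial_p L$ gives $\frac{d}{dt}[\nabla I(h'(t))-\lambda h^\bot(t)] = -\lambda (h'(t))^\bot$ — wait, one computes $\partial_x\phi=( p_2,-p_1)=-(p)^\bot\!\!$... let me just say: carrying out the differentiation and simplifying using $(x^\bot)'=(x')^\bot$, the $x$-terms combine so that the Euler--Lagrange equation integrates once to $\nabla I(h'(t))-\nabla I(h'(0)) = \lambda h^\bot(t)$, which is the first line of \eqref{eq: EL main}; the natural (transversality) boundary condition at the free endpoint $t=1$, namely $\partial_p L|_{t=1}$ being parallel to the admissible endpoint variations, together with this, yields $\nabla I(h'(1))=-\nabla I(h'(0))$. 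That $\lambda\ne0$ follows because $\lambda=0$ would force $h'$ constant (as $\nabla I$ is injective, $I$ being strictly convex), hence $h$ a line segment with $A(h)=0\ne a$, a contradiction. The $C^1$-smoothness of $h$ is part of the conclusion of Theorem~\ref{thm: Cesari} (the Tonelli/du Bois-Reymond regularity under superlinear strictly convex Lagrangian), and then the Euler--Lagrange equation plus smoothness of $\nabla I$ bootstrap to $h\in C^1$ with $h'$ determined implicitly by the equation.

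For Part 2, the argument is parallel but one dimension lower. When $\conv(\supp(X_1))=\{\mu_1\}\times\R$, the function $I$ is finite only on the line $\{\mu_1\}\times\R$: indeed $I(v_1,v_2)=+\infty$ unless $v_1=\mu_1$, because $K(u_1,u_2)$ as a function of $u_1$ is linear with slope $\mu_1$ (the first coordinate of $X_1$ being a.s. constant $\mu_1$), so $\sup_{u_1}(u_1(v_1-\mu_1))=+\infty$ for $v_1\ne\mu_1$. Hence any $h$ with $I_C(h)<\infty$ must satisfy $h_1'(t)=\mu_1$ a.e., i.e. $h_1(t)=\mu_1 t$, and the problem reduces to minimizing $\int_0^1 I_2(h_2'(t))\,dt$ where $I_2(v)=I(\mu_1,v)=\sup_{u_2}(u_2 v - K(\mu_1,u_2))+\text{(the linear part, which is constant after substituting }h_1'=\mu_1)$; one checks $I_2$ is finite, smooth, strictly convex, superlinear on $\R$ by the same duality argument applied to $u_2\mapsto K(\mu_1,u_2)$, using $\P(u_2\cdot(X_1)_2=\text{const})<1$. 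After the convexification/rescaling step as before, the signed-area constraint becomes $\frac12\int_0^1(\mu_1 t\, h_2' - \mu_1 h_2)\,dt = \mu_1\int_0^1(t h_2' )\,dt$ up to integration by parts $=\mu_1(h_2(1)-\int_0^1 h_2\,dt)$... more cleanly, $\tilde A(h)=\frac12\int_0^1(h_1h_2'-h_1'h_2)dt=\frac12\int_0^1(\mu_1 t h_2'(t)-\mu_1 h_2(t))\,dt$; integrating by parts and simplifying this is $\frac{\mu_1}{2}\int_0^1(2t-1)h_2'(t)\,dt\cdot\frac12$... I will not grind the constant here, but the point is it is again linear in $(h_2,h_2')$ with a nonvanishing multiplier on the term $\propto \mu_1 t$, so the one-dimensional Euler--Lagrange equation (with free endpoint at $t=1$) reads $I_2'(h_2'(t))-I_2'(h_2'(0))=\lambda\mu_1 t$ together with $I_2'(h_2'(1))=-I_2'(h_2'(0))$, which is \eqref{eq: EL main 1D}, with $\lambda\ne0$ again because $\lambda=0$ forces $h_2'$ constant and $A(h)=0$.

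\textbf{Main obstacle.} The hard part will be verifying, rigorously and with correct citations, that the isoperimetric-constrained variational problem falls within the scope of a version of the calculus-of-variations regularity-plus-multiplier theorem that (i) applies to absolutely continuous (not a priori Lipschitz) minimizers — recall the Lavrentiev/non-homogeneity warnings in the introduction — and (ii) handles the free endpoint and the integral constraint simultaneously, producing the transversality condition $\nabla I(h'(1))=-\nabla I(h'(0))$. This is precisely the role of the appendix result Theorem~\ref{thm: Cesari}, and the delicate input is the \emph{superlinear growth and strict convexity of $I$ on all of $\R^2$}, which is exactly why the hypothesis $\conv(\supp(X_1))=\R^2$ is imposed; the bookkeeping to derive from this the equivalent hypotheses of that theorem (e.g. Cesari's growth/normality conditions, or Tonelli's conditions) is where care is needed. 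A secondary but genuine subtlety is the passage from "Lemma~\ref{lem: convexification} gives a convex competitor with $A(g)\ge a$" to "there is a convex \emph{minimizer} with $\tilde A = a$": one must argue that shrinking does not lose optimality, which uses $I(cv)\le I(v)$ for $c\in[0,1]$ — valid since $I\ge0$, $I$ convex, $I(\mu)=0$ — and that the shrunk curve still lies in the feasible set with the area constraint now active.
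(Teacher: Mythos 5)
Your overall plan — convexify via Lemma~\ref{lem: convexification}, then apply Theorem~\ref{thm: Cesari} to the isoperimetric problem with signed-area constraint, and finally argue $\lambda\ne0$ — is the same route the paper takes, and your identification of where the hypothesis $\conv(\supp(X_1))=\R^2$ is used (to make $I$ finite, strictly convex, smooth, superlinear, and $\nabla I$ a homeomorphism with inverse $\nabla K$) is correct. However, there are two genuine gaps.

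First, the rescaling step is wrong. You claim $I(cv)\le I(v)$ for $c\in[0,1]$ via $I(cv)=I(cv+(1-c)\mu)$, but $cv\ne cv+(1-c)\mu$ unless $\mu=0$; convexity and $I(\mu)=0$ only give $I((1-c)\mu+cv)\le cI(v)$, which bounds a different point. In fact $I(cv)\le I(v)$ fails in general (take $v=\mu\ne0$ and $c$ small: $I(c\mu)\to I(0)>0=I(\mu)$). The paper avoids any rescaling: it uses the strict monotonicity of $\mathcal J_A$ (established in~\cite{AkopyanVysotskyProbab}) to pass from $\{A(h)=a\}$ to $\{A(h)\ge a\}$ and back, so the chain of minima in~\eqref{eq: I_C convex inf} is obtained without touching $I$ at all. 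With your convex competitor $g$ in hand, strict monotonicity immediately forces $A(g)=a$ (since $\mathcal J_A(A(g))\le I_C(g)\le\mathcal J_A(a)$), so the desired competitor is already there.

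Second, and more substantially, you only produce a convex minimizer ("WLOG the minimizer $h$ may be taken convex"), whereas the lemma asserts that \emph{every} minimizer in~\eqref{eq: rate func I_A} is a $C^1$-smooth convex curve satisfying the Euler--Lagrange equation. The paper's Step~2 is devoted precisely to this and is not a formality: given an arbitrary minimizer $h$, one argues by contradiction that if $h(t_0)$ lay in the interior of its convex hull then strict convexity of $I$ would force $h$ to be affine and non-constant on the interval between the exit and re-entry times to the boundary; Lemma~\ref{lem: convexification} (its last clause, about preserving an affine non-constant piece) then supplies a convex minimizer $g$ with $g'$ constant on an interval; but Step~1 shows $g$ solves~\eqref{eq: EL main} with $\lambda\ne0$, which makes $\lambda g$ constant there — impossible since $g$ is convex hence injective. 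A similar argument rules out non-injectivity. None of this appears in your proposal, and without it the "every minimizer" statement is unproved.

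A minor point: $C^1$-smoothness is not part of the conclusion of Theorem~\ref{thm: Cesari}; that theorem only delivers the multipliers, the absolutely continuous $\xi$, the Euler--Lagrange equation, and transversality. The paper obtains $C^1$-smoothness separately from Tonelli regularity theorems (Theorems~2.6.i and 2.6.ii in~\cite{Cesari}), verifying injectivity of $q\mapsto\partial L/\partial q$ (from strict convexity of $I$) and coercivity $|\nabla I(q)|\to\infty$ (from $\nabla K\circ\nabla I=\mathrm{id}$ and boundedness of $\nabla K$ on compacts). You should be explicit about this rather than folding it into Theorem~\ref{thm: Cesari}.
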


In the proof, we will use the following properties of the rate function $I$. Denote by $\mathcal{D}_{I}$ its effective domain, that is $\mathcal{D}_{I}:=\{u\in \R^2: I(u)<\infty\}$.
\begin{sublem} \label{sublem}
Assume that $\E e^{u \cdot X_1} < \infty$ for every $u\in \R^2$. Then the following is true.
\begin{enumerate}[leftmargin=*, label=\alph*), ref=\alph*]
\item \label{item: I strict} $I$ is {\it strictly} convex on $\mathcal{D}_I$, and 
\begin{equation} \label{eq: D_I inclusions}
\rint(\conv( \supp(X_1)))  \subset \mathcal{D}_{I} \subset  \cl(\conv( \supp(X_1))),
\end{equation}
where $\rint$ denotes the relative interior in the topology of the affine hull of  $\supp(X_1)$.

\item \label{item: nabla I}  If  $\conv( \supp(X_1))=\R^2$, then  $I$ is $C^1$-smooth on $\R^2$, $\nabla I: \R^2 \to \R^2$ is a homeomorphism, and $(\nabla I)^{-1} = \nabla K$.

\item \label{item: I_2'} If $\conv( \supp(X_1))=\{\mu_1\} \times \R$, then  $I_2$ is $C^1$-smooth on $\R$, $I_2' : \R \to \R$ is a homeomorphism, and $(I_2')^{-1} = K_2'$, where $K_2(u):=K(0,u)$ for $u \in \R$.
\end{enumerate}
\end{sublem}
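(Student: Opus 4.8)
The plan is to derive all three parts from standard Legendre--Fenchel duality for the cumulant generating function $K$. I would first record the basic regularity: it is classical that finiteness of $\E e^{u\cdot X_1}$ for every $u\in\R^2$ makes $u\mapsto\E e^{u\cdot X_1}$, hence $K$, of class $C^\infty$ on $\R^2$ --- differentiate under the expectation, using that the Laplace transform is finite on a neighbourhood of each point --- with $\nabla K(u)=\E_{\P_u}X_1$ and $\nabla^2K(u)=\Cov_{\P_u}(X_1)$, where $d\P_u:=(e^{u\cdot X_1}/\E e^{u\cdot X_1})\,d\P$. In particular $K$ is convex, and for every $u$ the law of $X_1$ under $\P_u$ has the same topological support as under $\P$, since $\P$ and $\P_u$ are mutually absolutely continuous. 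I would treat the parts in the order \ref{item: nabla I}, \ref{item: I_2'}, \ref{item: I strict}, as the first two feed into the last.

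For Part~\ref{item: nabla I}, assume $\conv(\supp(X_1))=\R^2$. Then $X_1$ is supported on no line, hence neither is it under any $\P_u$, so $\nabla^2K(u)=\Cov_{\P_u}(X_1)\succ 0$, $K$ is strictly convex, and $\nabla K$ is injective. To show $\nabla K$ is onto $\R^2$, I would fix $v\in\R^2$ and examine $\phi_v(u):=K(u)-u\cdot v=\log\E e^{u\cdot(X_1-v)}$: since $\conv(\supp(X_1-v))=\R^2$, the random variable $\omega\cdot(X_1-v)$ is unbounded above for every $\omega\in\S$, so $\phi_v$ is a convex function increasing to $+\infty$ in every direction, hence has bounded level sets (cf.\ \cite[Theorem~8.7]{Rockafellar}) and attains its minimum at some $u^*(v)$ with $\nabla K(u^*(v))=v$. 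This also gives $I(v)=u^*(v)\cdot v-K(u^*(v))<\infty$, so $\mathcal D_I=\R^2$. Now $K$ is a convex function of Legendre type (finite, essentially smooth, essentially strictly convex), so by Rockafellar's duality theory \cite[Section~26]{Rockafellar} the conjugate $I=K^*$ is also of Legendre type, $\nabla K$ is a homeomorphism of $\R^2$ onto the interior of $\mathcal D_I$ (which is $\R^2$), and $\nabla I=(\nabla K)^{-1}$. Hence $I\in C^1(\R^2)$ with $\nabla I=(\nabla K)^{-1}$ continuous, $\nabla I$ is a homeomorphism, and $(\nabla I)^{-1}=\nabla K$.

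For Part~\ref{item: I_2'}, assume $\conv(\supp(X_1))=\{\mu_1\}\times\R$. Then $X_1=(\mu_1,Y)$ a.s.\ with $\conv(\supp(Y))=\R$ (so $Y$ is non-degenerate), and $K(u_1,u_2)=\mu_1u_1+K_2(u_2)$ with $K_2(u_2):=K(0,u_2)=\log\E e^{u_2Y}$, so that $I_2(v):=I(\mu_1,v)=K_2^*(v)$ and $I=+\infty$ off the line $\{v_1=\mu_1\}$. Running the argument of Part~\ref{item: nabla I} in dimension one --- now $K_2''=\Var_{\P_u}(Y)>0$, and $u_2\mapsto K_2(u_2)-u_2v$ is coercive for each $v\in\R$ --- shows that $K_2$ is a strictly convex $C^\infty$ function of Legendre type with $K_2':\R\to\R$ a homeomorphism, so $I_2$ is finite (hence real-valued) on all of $\R$ and, by \cite[Section~26]{Rockafellar}, $I_2\in C^1(\R)$ with $I_2'=(K_2')^{-1}$ a homeomorphism and $(I_2')^{-1}=K_2'$.

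For Part~\ref{item: I strict}: the inclusion $\mathcal D_I\subset\cl(\conv(\supp(X_1)))$ comes from strict separation --- if $v\notin\cl(\conv(\supp(X_1)))$ then $u_0\cdot X_1\le c<u_0\cdot v$ a.s.\ for some $u_0\ne0$ and $c\in\R$, so $K(tu_0)\le tc$ and $I(v)\ge\sup_{t>0}t(u_0\cdot v-c)=+\infty$ --- while $\rint(\conv(\supp(X_1)))\subset\mathcal D_I$ is the surjectivity of the gradient map onto the relative interior, obtained above in the two cases of interest (and in general by reducing to the affine hull of $\supp(X_1)$ as in Part~\ref{item: I_2'}). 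Strict convexity of $I$ on $\mathcal D_I$ I would get from the essential strict convexity of $I=K^*$ furnished by \cite[Section~26]{Rockafellar}: in our cases $\mathcal D_I$ is either all of $\R^2$ or contained in an affine line, so strict convexity on $\rint(\mathcal D_I)$ propagates to $\mathcal D_I$; equivalently, a differentiable convex function with injective gradient is strictly convex, which applies to $I$ (resp.\ to $I_2$, hence to $I$ on $\mathcal D_I=\{\mu_1\}\times\R$). The step I expect to be the main obstacle is the surjectivity of $\nabla K$ onto $\intr(\conv(\supp(X_1)))$, i.e.\ the coercivity of the penalized functions $\phi_v$: this is where the structural hypotheses on $\supp(X_1)$ are genuinely used, the rest being formal Legendre--Fenchel bookkeeping.
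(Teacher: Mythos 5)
Your argument is, at bottom, the same as the paper's: strict convexity of $K$ on $\R^2$ followed by Rockafellar's Legendre-type duality \cite[Theorem~26.5]{Rockafellar} to get Parts~\ref{item: nabla I}) and \ref{item: I_2'}). What you do differently is supply the details that the paper outsources. For the strict convexity of $K$ you compute $\nabla^2 K(u)=\Cov_{\P_u}(X_1)\succ0$ via exponential tilting, where the paper invokes a ``standard application of H\"older's inequality''; the two are essentially the same fact. For Part~\ref{item: I strict}) the paper simply cites a dedicated external result (\cite[Proposition~1.1]{VysotskyStrict}, together with a corollary there), stated with no integrability hypotheses at all, whereas you re-derive it from Legendre--Fenchel theory: strict separation for $\mathcal D_I\subset\cl(\conv\supp X_1)$, coercivity of $\phi_v=K-\langle\cdot,v\rangle$ to show $\nabla K$ is onto and hence $\rint(\conv\supp X_1)\subset\mathcal D_I$, and essential strict convexity of $I=K^*$ for the strict convexity claim.

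The caveat is exactly the one you flag yourself. Your proof of Part~\ref{item: I strict}) is airtight only under the structural hypotheses of Parts~\ref{item: nabla I}) and \ref{item: I_2'}) (where $\mathcal D_I$ is all of $\R^2$, resp.\ all of $\{\mu_1\}\times\R$, so that $\rint\mathcal D_I=\mathcal D_I$ and essential strict convexity immediately gives strict convexity on $\mathcal D_I$). In the general case --- $\conv(\supp X_1)$ two-dimensional but a proper subset of $\R^2$, say bounded --- the coercivity argument for surjectivity of $\nabla K$ onto $\intr(\conv\supp X_1)$ still goes through (for $v$ interior, every direction $\omega$ has $\P(\omega\cdot(X_1-v)>\varepsilon)>0$), but strict convexity of $I$ on all of $\mathcal D_I$ including boundary points is not an automatic consequence of essential strict convexity, which only controls $I$ on $\mathcal D_{\partial I}$. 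Your closing remark ``and in general by reducing to the affine hull\dots as in Part~\ref{item: I_2'})'' would need to be made precise to recover the sublemma as stated; as is, it covers the two cases the paper actually uses, which is why the gap is harmless here but worth noting.
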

\begin{proof}
For Part~\ref{item: I strict}), see Proposition~1.1 and Corollary~2.12 in Vysotsky~\cite[Proposition~1.1]{VysotskyStrict}. Note that~\eqref{eq: D_I inclusions} holds true under no assumptions on finiteness of the Laplace transform. 

It follows that $K$ is $C^\infty$-smooth on $\R^2$. If  $\conv( \supp(X_1))=\R^2$, then $X_1$ is not supported on a straight line. This yields, by a standard application of H\"older's inequality, that $K$ is strictly convex on $\R^2$. Then the claims of Part~\ref{item: nabla I}) are given by Theorem~26.5 in  Rockafellar~\cite{Rockafellar}. 
The same theorem implies Part~\ref{item: I_2'}) since $I_2$ is the rate function of the $y$-coordinate of $X_1$, whose cumulant generating function is a smooth and strictly convex function $K_2$.
\end{proof}

\begin{proof}[{\bf Proof of Lemma~\ref{lem: Lagrangian}}]
Since the rate function $I$ is finite on $\R^2$ in Part 1 and on $\{\mu_1\} \times \R$ in Part 2, it follows from \eqref{eq: rate func I_A} that $\mathcal J_A$ is finite on $[0, \infty)$. For any $a>0$, we have
\begin{equation}
\label{eq: I_C convex inf}
\mathcal J_A(a)=\min_{\substack{h \in AC_0[0,1]: \\ A(h) \ge a}} I_C(h)= \min_{\substack{ h \in AC_0^{sc}[0,1]: \\ A(h) \ge a}} I_C(h) = \min_{\substack{ h \in AC_0^{sc}[0,1]: \\ |\tilde{A}(h)| \ge a}} I_C(h) = \min_{\substack{h \in AC_0[0,1]: \\  |\tilde{A}(h)| = a}} I_C(h),
\end{equation}
where the first equality follows from \eqref{eq: rate func I_A} and  monotonicity of $\mathcal J_A$ (established in~\cite{AkopyanVysotskyProbab}), the second one follows from Lemma~\ref{lem: convexification} and the inclusion $AC_0^c \subset AC^{sc}_0 \subset AC_0$, in the third one we used~\eqref{eq: Green}, and the last equality is analogous to the first one. We can restate \eqref{eq: I_C convex inf} as follows. Denote by $H_A(a)$ and $\tilde H(a)$, respectively, the sets of minimizers in \eqref{eq: rate func I_A} and in the last minimum in~\eqref{eq: I_C convex inf}. Then by~\eqref{eq: Green}  and Lemma~\ref{lem: convexification}, 
\begin{equation} \label{eq: inclusion for convex}
H_A(a) \cap AC_0^{sc}[0,1] = \tilde H_A(a) \cap AC_0^{sc}[0,1] \neq \varnothing, \quad a>0.
\end{equation}

We split the rest of the proof into two steps.


Step 1. Every $h \in \tilde H_A(a)$ is smooth and solves the Euler--Lagrange equation.

Finding the last minimum in~\eqref{eq: I_C convex inf} is  a standard variational problem of minimizing an integral functional subject to an isoperimetric constraint and a fixed starting point. 

Case 1. $\conv( \supp(X_1)) = \R^2$.

Let $h=(h_1,h_2) \in AC_0[0,1]$ be a minimizer of $I_C$ (not necessarily convex) under either of the constraints $\tilde A(h) = \pm a$. We will use Theorem~\ref{thm: Cesari} from the Appendix with $f(t,p,q)=I(q)$ and $g(t,p,q)=\frac12 p^\bot \cdot q = -\frac12 p \cdot q^\bot$, that is $F=I_C$ and $G=\tilde A$; see~\eqref{eq:signed area}. The function $g$ is $C^1$-smooth, and so is $f$ by Sublemma~\ref{sublem}. Conditions~\eqref{eq: R_0} and~\eqref{eq: R_i} are satisfied for $R(t,q):=\frac12|q|$ since $R(t,h'(t))$ is integrable by $ h \in AC_0[0,1]$. Thus, the assumptions of Theorem~\ref{thm: Cesari} are satisfied, and we conclude that $h$ satisfies the Euler--Lagrange equation 
\[
 \frac{\partial L}{\partial p} (t, h(t), h'(t)) =\frac{d}{dt} \Big ( \frac{\partial L}{\partial q} (t, h(t), h'(t)) \Big ) , \quad \text{a.e. }t \in [0,1],
\]
together with the transversality condition~\eqref{eq: EL boundary cds}  for the time-independent Lagrangian
\[
L(t, p,q) :=  \lambda_1 I(q) + \frac12 \lambda_2 q \cdot {p^\bot}, \qquad p, q \in \R^2,
\]
with some real $\lambda_1$ and $\lambda_2$ such that $\lambda_1^2+\lambda_2^2>0$. 


We have
\begin{equation} \label{eq: dL/dq}
\frac{\partial L}{\partial p}(t, p, q) = -\frac12 \lambda_2 q^\bot, \qquad \frac{\partial L}{\partial q} (t, p, q) = \lambda_1 \nabla I(q) +\frac12 \lambda_2 p.
\end{equation}
The function $\frac{\partial L}{\partial q} (t, h(t), h'(t))$, extended to $[0,1]$ by continuity, is absolutely continuous by Theorem~\ref{thm: Cesari}, and it follows that $\nabla I(h'(t))$ shares the same property because $h$ is absolutely continuous. The Euler--Lagrange equation simplifies to
\[
-\lambda_2 (h')^\bot(t) = \lambda_1 \frac{d}{dt} \big( \nabla I(h'(t)) \big), \quad  \text{a.e. } t \in [0,1].
\]
Note that $\lambda_1 \neq 0$, since otherwise it must be that $h(t) \equiv 0$ by $h(0)=0$, which is impossible by $|\tilde A(h)|=a>0$. Therefore, we can take $\lambda_1=1$. Integrating and using that $h$ and $\nabla I(h'(t)) $ are absolutely continuous and $h(0)=0$, we arrive at
\[
-\lambda_2 h^\bot(t) =\nabla I(h'(t)) - \nabla I(h'(0)), \quad  \text{a.e. } t \in [0,1],
\]
Substituting $-\lambda $ for $\lambda_2$, we obtain the first equality in \eqref{eq: EL main} but so far only for a.e.\ $t$.

Take $t=1$ and use that by the transversality condition \eqref{eq: EL boundary cds}, we have
\[
\nabla I(h'(1)) = -\frac{\lambda_2}{2} h(1)^\bot.
\]
Then $ \nabla I(h'(0))  = -\nabla I(h'(1))$, as stated in \eqref{eq: EL main}. Lastly, it follows that $\lambda_2 \neq 0$, since otherwise $\nabla I(h'(t)) = \nabla I(h'(0))$ for every $t \in [0,1]$, which by Sublemma~\ref{sublem}.\ref{item: nabla I} implies that $h'$ is constant on $[0,1]$, contradicting the condition $|\tilde A(h)| =a >0$. 

To check that \eqref{eq: EL main} holds true for every $t \in [0,1]$, let us show that $h$ is $C^1$-smooth\footnote{Here we prove smoothness using a general method. Later on we will solve the Euler--Lagrange equation explicitly (this does not actually rely on smoothness of $h$), and then check directly that the particular solutions obtained are smooth.}. To this end, we will apply Tonelli {\it regularity theorems}, which ensure smoothness of solutions to the Euler--Lagrange equation. 
We use Theorems 2.6.i and 2.6.ii in~\cite{Cesari}. There are two conditions to check: (i) the gradient mapping $\frac{\partial L}{\partial q} (t, h(t), \cdot)$ is injective for every $t \in [0,1]$, and (ii) $|\frac{\partial L}{\partial q}(t, h(t), q)| \to \infty$ as $|q| \to \infty$ uniformly in $t \in [0,1]$. By \eqref{eq: dL/dq},  we have
\[
\frac{\partial L}{\partial q} (t, h(t), q) = \lambda_1 \nabla I(q) +\frac12 \lambda_2 h^\bot(t).
\]
Condition (i) is satisfied since the rate function $I$ is strictly convex. For Condition~(ii), it suffices to show that $|\nabla I(q)| \to \infty$ as $|q| \to \infty$. Assuming that the sequence $\nabla I(q_n)$ is bounded for some sequence $q_n$ such that $|q_n| \to \infty$, we arrive at a contradiction since $\nabla K ( \nabla I(q_n))=q_n$ by Sublemma~\ref{sublem}.\ref{item: nabla I} but the continuous function $\nabla K$ is bounded on compact subsets of $\R^2$.

Case 2. $\conv( \supp(X_1)) =\{\mu_1\} \times \R$ with $\mu_1 \neq0$, where $\mu=(\mu_1, \mu_2)$. 

We repeat the above argument for the time-dependent Lagrangian
\[
L(t,p,q) :=  I_2(q) + \frac12 \lambda (\mu_1 t q - \mu_1 p ) , \qquad p, q \in \R.
\]
Conditions \eqref{eq: R_i} and~\eqref{eq: R_0} satisfied with $R(t,q)=\frac12 \mu_1 |q|$. From equalities \eqref{eq: EL} and \eqref{eq: EL boundary cds} we obtain that $-\lambda_2 \mu_1 = \lambda_1 I_2''(h_2'(t))$ and $\lambda_1 I_2'(h_2'(1)) = -\frac12 \lambda_2 \mu_1 $, which yield~\eqref{eq: EL main 1D}.

Step 2. Every $h \in H_A(a)$ is convex.

We argue by contradiction. Assume that there exists a $t_0 \in (0,1)$ such that $h(t_0) \in \intr (\conv(\im h)) $. Define the nearest to $t_0$ times of exit from and entrance to the boundary of the convex hull:
\[
t_1:=\max \bigl \{t <t_0: h(t) \in \partial (\conv(\im h)) \bigr \}, \quad 
t_2:=\min \bigl \{t >t_0: h(t) \in \partial  (\conv(\im h) ) \bigr \}.
\]
We have $t_0 \in (t_1, t_2)$ by  continuity of $h$. We claim that $h$ is affine and non-constant  on the interval $[t_1, t_2]$.  Otherwise, by strict convexity of $I$,  the energy $I_C(h)$ of $h$ will decrease if we make $h$ affine on $[t_1, t_2]$. Since this does not change the boundary of the convex hull of $h$ and, consequently, the area $A(h)$, we arrive at a contradiction with  optimality of $h$. 

By Lemma~\ref{lem: convexification}, there is a convex function $g \in H_A(a)$ such that $g'$ is a non-zero constant a.e.\ on an interval. Then $g \in \tilde H_A(a)$ by \eqref{eq: inclusion for convex}, hence $g$ satisfies the Euler--Lagrange equations as shown in Step 1. In Case 1 this implies that $\lambda g$ is constant on the interval, which is a contradiction since $\lambda \neq 0$ and $g$ is convex and hence injective on $[0,1)$. In Case 2 this implies that $\lambda \mu_1 t$ is constant on the interval, which again is a contradiction by $\lambda \neq 0$ and $\mu_1 \neq 0$. 

Thus, the image of $h$ is a subset of $\partial (\conv(\im h))$, and  in order to obtain  convexity of $h$ it remains to prove its injectivity, except for the possible equality $h(0)=h(1)$. Assuming that $h$ is not injective on $(0,1]$, we  find  $0<t_1<t_2\le 1$ such that $\{h(s): t_1\le s\le t_2 \} \subset \{h(s): 0\le s\le t_1 \}$. By the same argument as above (where $t_1$ and $t_2$ were found for a given $t_0$), based on the observation that the convex hull of $\{h(s): 0\le s\le t_2 \}$ will not change if we make $h$ to be affine on $[t_1, t_2]$, we arrive at a contradiction unless $h$ is constant on this interval. Then it follows that $h$ is a stopping convex curve, hence $h \in \tilde H_A(a)$ by \eqref{eq: inclusion for convex}. By Step 1, $h$ satisfies the Euler--Lagrange equation, and this contradicts constancy of $h$ on $[t_1, t_2]$, as already shown. This completes the proof of Step~2.

Putting together the claims of Steps 1 and 2 with equality~\eqref{eq: inclusion for convex} concludes the proof.
\end{proof}

\section{Proofs of the main results}
\begin{proof}[{\bf Proof of Theorem~\ref{thm: variational}}]
Case 1. Assume that $\conv( \supp(X_1)) = \R^2$. 

Recall that $g_{\alpha, \ell}^\tau$ was defined to be the continuous bijective parametrization of the arc $ \{ u \in K^{-1}(\alpha): \tau u \cdot \ell^\bot \ge 0 \}$ that has orientation $\tau$ and satisfies \eqref{eq: parametrization 2}, where $\alpha>0$ . Let us clarify why such parametrization exists, and show that it is $C^\infty$-smooth. Note that the set $K^{-1}(\alpha)$ is a $C^\infty$-smooth one-dimensional manifold since $K$ is $C^\infty$-smooth under the assumption that $\E e^{u \cdot X_1} <\infty$ for every $u \in \R^2$, and $\nabla K \neq 0 $ on $K^{-1}(\alpha)$ because $K$ attains its minimium on $K^{-1}((-\infty, 0])$. Then there exists a $C^\infty$-smooth bijective parametrization $g$ of the above arc  that has orientation $\tau$ and satisfies  $g'(t) \neq 0$ for a.e.\ $t \in [0,1]$.  Define $V(t):= (\lambda_{\alpha, \ell}^\tau)^{-1} \int_{g([0,t])} \frac{\sigma(ds)}{|\nabla K(s)|} $ for $t \in [0,1]$. This function is invertible as a continuous increasing bijection of $[0,1]$ onto itself. Then $g_{\alpha, \ell}^\tau := g \circ V^{-1}$ satisfies \eqref{eq: parametrization 2} by the construction.  It is $C^\infty$-smooth by the inverse function theorem since $V'(t) = (\lambda_{\alpha, \ell}^\tau)^{-1} \frac{|g'(t)|}{|\nabla K(g(t))|}>0 $ by the change of variables formula. 

Let $h \in AC_0[0,1]$ be a minimizer in \eqref{eq: rate func I_A}. By Lemma~\ref{lem: Lagrangian}, it satisfies the Euler--Lagrange equations~\eqref{eq: EL main}. Since $(\nabla I)^{-1} = \nabla K$ by Sublemma~\ref{sublem}.\ref{item: nabla I}, it follows from~\eqref{eq: EL main} that 
\begin{equation} \label{eq:EL 2}
\nabla K(\lambda h^\bot(t) +  \nabla I(h'(0))) =h'(t), \qquad t \in [0,1],
\end{equation}
for some non-zero real $\lambda$. Take the scalar product of both sides of this equality with $(\lambda h'(t))^\bot $, so the left-hand side becomes a total derivative. Then integrate from $0$ to $t$ to arrive at
\begin{equation} \label{eq: constant}
K(\lambda h^\bot(t) + \nabla I(h'(0))) =\alpha , \qquad t \in [0,1],
\end{equation}
where $\alpha := K(\nabla I(h'(0))) $ by $h(0)=0$. 

Denote $g(t):=\lambda h^\bot(t) + \nabla I(h'(0))$ for $t \in [0,1]$. The image of this curve belongs to the level set $K^{-1}(\alpha)$. By Lemma~\ref{lem: Lagrangian}, an optimal trajectory $h$ is a $C^1$-smooth convex curve, hence so is the curve $g$. We have $g(0)= \nabla I(h'(0))$, and  $g(1)=- g(0)$ by the second equality in~\eqref{eq: EL main}. Thus, the set $K^{-1}(\alpha)$  contains two points of opposite sign, which is possible only when $\alpha\ge 0$ because $0 \in K^{-1}((-\infty,\alpha])$ by convexity of $K$. If $\alpha >0$, then $\nabla I(h'(0))$ is non-zero, hence $g(0)$ is the point of intersection of $K^{-1}(\alpha)$ and $\ell \R$ for  $\ell:= \nabla I(h'(0))/|\nabla I(h'(0))|$.

Let us show that $\alpha=0$ is impossible. Otherwise, $g$ must be a closed curve  such that  $ g(1)=g(0)=0$ and $\im g = K^{-1}(0)$. Hence $h'(0)=\nabla K(0)=\mu$. If $\mu=0$, then $K^{-1}(0)=\{0\}$ and hence $h\equiv 0$ by \eqref{eq: constant}, in contradiction to $A(h)=a>0$. If $\mu \neq 0$, then $h$ is cannot be optimal. In fact, consider the curves $h_\varepsilon$ given by $h_\varepsilon(t):=t \mu$ for $t \in [0,\varepsilon]$ and $h_\varepsilon(t):=  \varepsilon \mu + h(t-\varepsilon)$ for $t \in [\varepsilon,1]$, where $\varepsilon \in (0,1)$. We have $I_C(h_\varepsilon) < I_C(h)$ by $I(\mu)=0$.  We also  claim that 
\begin{equation} \label{eq: inclusion}
\conv(\im h) + \varepsilon \mu \subset \conv(\im h_\varepsilon) \text{ for some $\varepsilon>0$ small enough}. 
\end{equation}
Then $A(h) =A(\conv(\im h) + \varepsilon \mu) \le A( h_\varepsilon)$, which contradicts  the optimality of $h$ by monotonicity of $\mathcal J_A$ since $I_C(h_\varepsilon) < I_C(h)$. 

To show~\eqref{eq: inclusion}, we first note that the inclusion shown is equivalent to $\conv (\im g) \subset \conv(g([0,1-\varepsilon]) \cup \{-\varepsilon \lambda \mu^\bot\}) $, which is in turn equivalent to 
\begin{equation} \label{eq: inclusion 2}
\conv(g([1-\varepsilon, 1])) \subset \conv(\{g(1-\varepsilon), 0, -\varepsilon \lambda \mu^\bot\}).
\end{equation}
By convexity of $K^{-1}(0)$, the set on the left-hand side is contained in $\conv(\{g(1-\varepsilon), x_\varepsilon, 0\})$, where $x_\varepsilon$ is the point of intersection of the tangent lines to $K^{-1}(0)$ at points  $g(1-\varepsilon)$ and $0$. Approximating $K^{-1}(0)$ by the osculating circle $C$ at $0$ and using that $g(1-\varepsilon)=-\varepsilon \lambda \mu^\bot + o(\varepsilon)$, one can show that $x_\varepsilon = -\frac12 \varepsilon \lambda \mu^\bot + o(\varepsilon)$ as $\varepsilon \to 0+$. This can be shown to follow from the observation that $x_\varepsilon$ belongs to the straight line passing through the centre of $C$ and the midpoint $g(1-\varepsilon)/2 + o(\varepsilon)$ of the arc of $C$ that approximates the smaller arc of $K^{-1}(0)$ between $0$ and $g(1-\varepsilon)$. For an analytic proof, one can instead consider the Taylor expansion of $K$ to the second order about $0$. Hence $x_\varepsilon \in \conv(\{0,  -\varepsilon \lambda \mu^\bot\})$ for all $\varepsilon >0$ small enough, implying~\eqref{eq: inclusion 2}. Thus, we showed that $\alpha \neq 0$.

Since $\alpha >0$, the curve $g$ bijectively parametrizes either of the arcs $ \{ u \in K^{-1}(\alpha): \pm u \cdot \ell^\bot \ge 0 \}$ and by \eqref{eq:EL 2}, satisfies equality $\nabla K(g(t))= -g'(t)^\bot/\lambda$ for every $t \in [0,1]$. This equality, together with the orientation,  defines uniquely the parametrization $g$ and the constant $\lambda$. This implies that $g=g_{\alpha, \ell}^\tau$ and $\lambda= \tau \lambda_{\alpha, \ell}^\tau$, where $\tau$ is the orientation of $g$, thus establishing~\eqref{eq: optimal curve}. Indeed, we have $\lambda_{\alpha, \ell}^\tau \nabla K(g_{\alpha, \ell}^\tau(t))= -\tau(g_{\alpha, \ell}^\tau)'(t)^\bot$ for $t \in [0,1]$ since (i) the vectors in this equality have the same norm by equivalent definition~\eqref{eq: parametrization} of $g_{\alpha, \ell}^\tau$, and (ii) the vector $\nabla K(g_{\alpha, \ell}^\tau(t))$, normal to the smooth curve $K^{-1}(\alpha)$ at point $g_{\alpha, \ell}^\tau(t)$, is co-directional with $-\tau (g_{\alpha, \ell}^\tau)'(t)^\bot$.  Since $A(h)=a$ and $A(h)=A(g)/\lambda^2$, by definition \eqref{eq: E^pm def} we have $a=  E^\tau(\alpha, \ell)/(\lambda_{\alpha, \ell}^\tau)^2$. By~\eqref{eq: coarea}, this is equivalent to
\[
\frac{1}{2 \sqrt a} = \frac{\lambda_{\alpha, \ell}^\tau}{2\sqrt{E^\tau(\alpha, \ell)}} = \frac{\partial}{\partial \alpha} \sqrt{E^\tau(\alpha, \ell)},
\]
as required. The minimizer $h$ is $C^\infty$-smooth because so is $g_{\alpha, \ell}^\tau$, as we shown above.

Lastly, assume that the distribution of $X_1$ is centrally symmetric. The last expression above then equals $(\sqrt{E(\alpha)/2})'$. Note that for any $\alpha, \beta \ge 0$ and $t \in (0,1)$, we have
\[
E(t \alpha + (1-t) \beta)= A  \big (K^{-1} (t \alpha + (1-t) \beta )\big) > A \big(t K^{-1}(\alpha) + (1-t) K^{-1}(\beta) \big)
\]
by strict convexity of $K$, because if $K(u_1) \le \alpha$ and $K(u_2) \le \beta$ for some $u_1 \neq u_2$, then 
\[
K(t u_1 + (1-t) u_2) < t K(u_1) + (1-t) K(u_2) \le t \alpha + (1-t) \beta. 
\]
On the other hand, by the Brunn--Minkowski inequality,
\begin{align} \label{eq: Brunn-Mink}
\sqrt{A \big(t K^{-1}(\alpha) + (1-t) K^{-1}(\beta) \big)} &\ge \sqrt{ A(t K^{-1}(\alpha))} + \sqrt{ A ((1-t) K^{-1}(\beta) )} \\
& = tE(\alpha)+ (1-t) E(\beta)\notag,
\end{align}
therefore the function $\sqrt{E}$ is strictly concave. Hence the equation $(\sqrt{ E(\alpha)})'=1/\sqrt{2a}$ admits at most one solution for every fixed $a>0$. This solution does exist for every $a>0$ because there always exists an optimal curve $h$ with $A(h)=a$ and $I_C(h)<\infty$.

Case 2. Assume first that $\conv( \supp(X_1)) =\{\mu_1\} \times \R$ with $\mu_1 \neq 0$, where $\mu=(\mu_1, \mu_2)$. Similarly to the argument above in Case 1, it follows from \eqref{eq: EL main 1D} that the $y$-coordinate $h_2(t)$ of an optimal curve $h$ satisfies
\[
K_2'(u(2 t - 1))=h_2'(t), \quad t \in [0,1],
\]
where we denoted $u:=I_2'(h_2'(1))$.  We have $u\neq 0$ since otherwise $h_2 \equiv 0$ and so $A(h)=0$, in contradiction to $a>0$. 
This gives $h_2(t)=\frac{1}{2u} (K(0, u(2t-1)) - K(0,-u) )$. Thus, an optimal curve is of the form given by \eqref{eq: h_pm}, as stated. Its orientation $\tau$ is the sign of $u \mu_1$. Note that replacing $u$ by $-u$ reflects the image of $h$ about the midpoint of the line segment joining $0$ and $h(1)$; recall that $h(t)=(\mu_1 t, h_2(t))$.  Therefore, this changes the orientation of the curve but neither its energy nor the area of its convex hull. Thus, both curves in \eqref{eq: h_pm} are optimal.

It remains to find $u$ using that $A(h)=a$. We have $\tau  \mu_1^{-1}A(h) = \frac12 h_2(1) - \int_0^1 h_2(t)dt$ since if $\mu_1 >0$ and $u>0$ (resp., $u<0$), then the graph of $h_2$, which is the image of $h$ contracted along the $x$-axis by factor $\mu_1$, lies below (resp., above) the line segment joining $0$ and $h(1)$. Then
$$
\frac{a \tau}{\mu_1} = \frac{1}{4u} \Big (K(0, u) + K(0,-u)  - 2\int_0^1 K(0, u(2t-1)) dt \Big) = \frac14 E'(u),
$$
where $E(u)=\int_{-1}^1 K(0,us)ds$. Clearly, $E$ is a symmetric function whose derivative vanishes at $0$ and is continuous and strictly increasing on $[0, \infty)$ by strict convexity of $K(0, \cdot)$. Hence for every $a>0$ and $\tau = \sgn(\mu_1)$, the above equation admits a unique positive solution $u_a$, which exists by existence of an optimal curve for every $a>0$. 
\end{proof}

\begin{proof}[{\bf Proof of Proposition~\ref{prop}}]
Case 1. The distribution of $X_1$ is symmetric and two-dimensional.

Consider the random vector $X_1^{\varepsilon}:= X_1 + \sqrt{\varepsilon} N$, where $\varepsilon >0$  and $N$ is a standard normal random vector on the plane independent of $X_1$. The distribution of  $X_1^{\varepsilon}$ is centrally symmetric and it satisfies $\conv( \supp(X_1^{\varepsilon})) = \R^2$, therefore Theorem~\ref{thm: variational} applies. 
The cumulant generating function $K_\varepsilon$  of this vector satisfies $K_\varepsilon(u)=K(u)+\varepsilon |u|^2/ 2$, and its rate function $I_\varepsilon$ of $X_1^{\varepsilon}$ is given by the infimal convolution 
\[
I_\varepsilon (v)=\inf \big\{ I(v_1) + |v_2|^2/(2 \varepsilon ): v_1 + v_2 =v, v_1 \in \R^2, v_2 \in \R^2 \big\}
\] 
for $v \in \R^2$; see~Rockafellar~\cite[Theorem 16.4]{Rockafellar}. This is the so-called Moreau--Yosida regularization of $I$. It holds that $0 \le I_\varepsilon \le I$ and $I_\varepsilon(v) \to I(v)$ as $\varepsilon \to 0+$ for any $v \in \R^2$. 

Fix an $a \in (0, a_{max})$ and $\ell \in \S$, and let $h_\varepsilon$ be the minimizer in \eqref{eq: optimal curve} for $X_1^{\varepsilon}$ corresponding to the curve $g_{\alpha_\varepsilon, \ell}^+$  with the unique $\alpha_\varepsilon$ given by $(\sqrt{ E_\varepsilon (\alpha_\varepsilon)})'=1/\sqrt{2a}$, where $E_\varepsilon(\beta):=A(K^{-1}_\varepsilon(\beta))$. It is easy to see that for every real $\beta$, the compact convex sets $K_\varepsilon^{-1}((-\infty,\beta])$ converge in the Hausdorff distance to $K^{-1}((-\infty,\beta])$  as $\varepsilon \to 0+$. Hence we have the pointwise convergence $\sqrt {E_\varepsilon(\beta)} \to \sqrt {E(\beta)}$.  Because these functions are strictly concave by the Brunn--Minkowski inequality (cf.~\eqref{eq: Brunn-Mink}), this implies the locally uniform convergence of their derivatives by \cite[Theorem~25.7]{Rockafellar}. Since these derivatives are continuous and strictly decreasing, and by the assumption $1/\sqrt{2a}$ belongs to the interior of the range of $(\sqrt {E(\beta)})'$, it follows that 
\[
\alpha_\varepsilon=[(\sqrt{ E_\varepsilon})']^{-1}(1/\sqrt{2a}) \to [(\sqrt{ E })']^{-1}(1/\sqrt{2a}) =: \alpha
\] 
and $\alpha>0$. This it turn implies convergence of the endpoints $g_{\alpha_\varepsilon, \ell}^+(t) \to g_{\alpha, \ell}^+(t)$ for both $t \in \{0,1\}$ because all these points belong to $\ell \R$. Moreover, by equality~\eqref{eq: coarea},  we have
\begin{equation} \label{eq: lambdas converge}
\lambda_{\alpha_\varepsilon, \ell, \varepsilon}^+ =  \sqrt{E_\varepsilon(\alpha_\varepsilon)} (\sqrt{ E_\varepsilon (\alpha_\varepsilon)})' \to  \sqrt{E(\alpha)} (\sqrt{ E (\alpha)})' = \lambda_{\alpha, \ell}^+ 
\end{equation}
with $\lambda_{\alpha_\varepsilon, \ell, \varepsilon}^+ $ is defined as in~\eqref{eq: lambda^pm} with $K_\varepsilon$ and $\alpha_\varepsilon$ substituted for $K$ and $\alpha$.

Let us prove that $g_{\alpha_\varepsilon, \ell}^+(t) \to g_{\alpha, \ell}^+(t)$ for every $t \in (0,1)$. The convex curves $g_{\alpha_\varepsilon, \ell}^+$ parametrize the arcs $\{ u \in K_\varepsilon^{-1}(\alpha_\varepsilon): \pm u \cdot \ell^\bot \ge 0 \}$, which converge in the Hausdorff distance to the arc $\{ u \in K^{-1}(\alpha): \pm u \cdot \ell^\bot \ge 0 \}$ parametrized by $g_{\alpha, \ell}^+$. On the other hand,  we have $0 \in K_\varepsilon^{-1}((-\infty, \alpha_\varepsilon)) \subset K^{-1}((-\infty, \alpha))$ for all $\varepsilon>0$ that are small enough since $\alpha>0$ and $K_\varepsilon(0)=K(0)=0$. Therefore, $g_{\alpha_\varepsilon, \ell}^+(t) \to g_{\alpha, \ell}^+(t)$ for every $t \in [0,1]$ if and only if $\varphi_\varepsilon(t) \to \varphi(t)$ for every $t \in [0,1]$, where $\varphi(t)$ is the angle between $g_{\alpha, \ell}^+(t)$ and $g_{\alpha, \ell}^+(0)$ and $\varphi_\varepsilon(t)$ is defined analogously. These angle functions are strictly monotone and satisfy $\varphi_\varepsilon(0) = \varphi(0)$ and $\varphi_\varepsilon(1) = \varphi(1)=\pi$, therefore their pointwise convergence is equivalent to pointwise convergence of their inverses. 

For any angle $\theta \in [0, \pi]$, consider the cone 
\[
R^\theta:=\{u \in \R^2: u \cdot \ell \ge |u| \cos \theta, u \cdot \ell^\bot \ge 0 \},
\] 
and denote $E^{\theta}(\beta):=A(K^{-1}(\beta) \cap R^\theta)$ and $E^\theta_{\varepsilon}(\beta):=A(K_\varepsilon^{-1}(\beta) \cap R^\theta)$ for real $\beta$. The functions $\sqrt{E^{\theta}}$ and $\sqrt{E^{\theta}_\varepsilon}$ are strictly concave by the Brunn--Minkowski inequality. Then the same argument as above implies that for any $\theta \in [0, \pi]$, we have
\[
\varphi_\varepsilon^{-1}(\theta) \lambda_{\alpha_\varepsilon, \ell, \varepsilon}^+ = \int_{K_\varepsilon^{-1}(\alpha_\varepsilon) \cap R^\theta} \frac{1}{|\nabla K_\varepsilon(s)|} \sigma(ds) \to \int_{K^{-1}(\alpha) \cap R^\theta} \frac{1}{|\nabla K(s)|} \sigma(ds) = \varphi^{-1}(\theta) \lambda_{\alpha, \ell}^+
\]
as $\varepsilon \to 0+$; this extends \eqref{eq: lambdas converge}, which corresponds to $\theta=\pi$. Hence $\varphi_\varepsilon^{-1}(\theta) \to \varphi^{-1}(\theta)$, as required. As explained above, this yields pointwise convergence of $g_{\alpha_\varepsilon, \ell}^+$ to $g_{\alpha, \ell}^+$. The derivatives of these functions converge pointwise as well because 
\[
(g_{\alpha_\varepsilon, \ell}^+)'(t) = \lambda_{\alpha_\varepsilon, \ell, \varepsilon}^+  \nabla K_\varepsilon(g_{\alpha_\varepsilon, \ell}^+(t)) \to \lambda_{\alpha, \ell}^+  \nabla K(g_{\alpha, \ell}^+(t)) = (g_{\alpha, \ell}^+)'(t)
\] 
using that $\nabla K_\varepsilon$ converge to $\nabla K$ locally uniformly by \cite[Theorem~25.7]{Rockafellar}. Hence for the function $h(t):=-(\lambda_{\alpha, \ell}^+)^{-1} \big(g_{\alpha, \ell}^+(t) - g_{\alpha, \ell}^+(0)\big)^\bot$, it is true that $h_\varepsilon(t) \to h(t)$ and  $h_\varepsilon'(t) \to h'(t)$ for every $t \in [0,1]$.

Finally, note that 
\[
h'(t) = \nabla K(g_{\alpha, \ell}^+(t)) \subset \nabla K(K^{-1}(\alpha)) \subset \nabla K (\R^2) =\intr \mathcal D_I,
\]
where the last equality holds by \cite[Theorem~26.5]{Rockafellar} applied to the differentiable strictly convex function $K$, which is finite on $\R^2$ (and thus is essentially smooth). This implies, using that the pointwise convergence of convex functions $I_\varepsilon \to I$ is uniform on every compact subset of $\intr (\mathcal D_I)$ by \cite[Theorem~10.8]{Rockafellar}, that $I_\varepsilon(h_\varepsilon'(t)) \to I(h'(t))$ for every $t \in [0,1]$. Then for any $g \in AC_0[0,1]$ with $A(g)=a$, we have $I_C(g) \ge {(I_\varepsilon)}_C(g) \ge {(I_\varepsilon)}_C(h_\varepsilon)$ by the inequality $I \ge I_\varepsilon$, hence by Fatou's lemma,
\[
I_C(g) \ge \liminf_{\varepsilon \to 0+} \int_0^1 I_\varepsilon(h_\varepsilon'(t))dt \ge \int_0^1 \liminf_{\varepsilon \to 0+} I_\varepsilon(h_\varepsilon'(t))dt = I_C(h).
\]
Thus, $h$ is a minimizer in \eqref{eq: rate func I_A} that is of the form~\eqref{eq: optimal curve}, as required.  By the same argument, there is a negatively oriented minimizer of the form~\eqref{eq: optimal curve}.

Case 2. $X_1=(\mu_1, Y)$, where $Y$ is a non-constant random variable.

The proof repeats the one in Case 1. Put $K_2(u):=K(0,u)$ for the cumulant generating function of $Y$, and approximate it by $K_2^\varepsilon(u):=K(0,u)+ \varepsilon u^2/2$ for $\varepsilon>0$. The function $E(u)=\int_{-1}^1 K_2(us) ds$ is strictly convex on $\R$ by $E''(u)=\int_{-1}^1 s^2 K_2''(us) ds > 0$ for $s \neq 0$, and so are the functions $E_\varepsilon(u):=\int_{-1}^1 K_2^\varepsilon(us) ds$. We clearly have $E_\varepsilon(u) \to E(u)$ as $\varepsilon \to 0+$ for every $u \in \R$, and this implies, as in Case 1, that  $E_\varepsilon' \to E'$ locally uniformly. Hence the solutions $ u_a^\varepsilon$ to the equations $E'_\varepsilon(u)=4 a /\mu_1$ satisfy $u_a^\varepsilon \to u_a$, where $u_a$ is the solution to $E'(u)=4 a /|\mu_1|$, since $4 a /|\mu_1|$ is in the interior of the range of $E'$ by the assumption. This ensures that  the minimizers $h_\pm^\varepsilon$, defined by substituting $K_2^\varepsilon$ for $K(0, \cdot)$ in~\eqref{eq: h_pm}, converge pointwise to $h_\pm$. This implies, as above, the pointwise convergence $(h_\pm^\varepsilon)' \to (h_\pm^\varepsilon)'$ of derivatives, which in turn implies that $h_\pm$ are minimizers of $I$ subject to $A(h_\pm)=a$. 
\end{proof}

\appendix

\section{Convexifications}

Here we present the planar geometric results needed for our proof of Lemma~\ref{lem: convexification}.

Let $P$ be a directed polygonal line on the plane with vertices $a_1,a_2,\dots, a_n$  passing through these points in the given order\footnote{Formally, $P$ is the equivalence class of continuous curves $f$ such that for some real $ t_1 < \ldots < t_n$, $f$ is defined on $[t_1,t_n]$ and for each $i\in \{1, \ldots, n-1\}$, the restriction of $f$ to $[t_i, t_{i+1}]$ is a bijective parametrization of the line segment $[a_i, a_{i+1}]$ that satisfies $f(t_i)=a_i$. We assume that $a_i \neq a_{i+1}$, and we say that such $f$ parametizes $P$. Let us agree that two planar curves $f$ on $[t_1, t_2]$ and $g: [s_1, s_2] \to \R^2$ are equivalent iff $f = g \circ \gamma$ for some strictly increasing continuous time-change $\gamma:[s_1,s_2] \to [t_1, t_2]$. By $\conv P$ we mean $\conv(\im f)$ for a parametrization $f$ of $P$.}. We denote it by $P=[a_1, \ldots, a_n]$. We will usually omit the adjective `directed', and say that $P$ is closed if $a_1=a_n$. 
A convex polygonal line $[b_1, \dots, b_n]$ is a \emph{convexification} of $P$ if $a_1=b_1$ and there is a permutation $\pi$ of $\{1, \dots, {n-1}\}$ such that 
$a_{i+1} - a_i = b_{\pi(i)+1} - b_{\pi(i)}$ for every $1 \le i \le n-1$. This permutation can be obtained by ordering by increase of the angle between the vectors $a_{i+1} - a_i$ and the direction $a_1-a_n$ if $a_n \neq a_1$ or any fixed direction if $a_n=a_1$. Such a permutation is not unique because of the following reasons. First, taking $\pi'(i):=\pi(n-1-i)$, that is 
measuring the angle in the opposite direction, always gives a different convexification of~$P$ (see Figure~\ref{fig:two polygonal lines}). We call it {\it reverse} to the one given by $\pi$; similarly, we say that $[a_n, \ldots, a_1]$ is the {\it reverse} of $P$. The convexification used in the proof Lemma~\ref{lem: convexification} corresponds to the polygonal line on the left of Figure~\ref{fig:two polygonal lines}. Second, $P$ may have co-directional edges, and any permutation of these edges corresponds to a single edge in a convexification. Third, if the polygonal line is closed, one can replace $\pi$ by its composition with a cyclic permutation, which corresponds to shifting the corresponding vertex of the convexification to $a_1=a_n$. It is easy to see that there are no other convexifications of $P$, and that all of them have the same area of their convex hulls.
\begin{center}
	\parbox{3cm}{
	\begin{center}
		\includegraphics{fig-conv-1.mps}
	\end{center}
	}
	\hskip 2cm
	\parbox{3cm}{
	\begin{center}
		\includegraphics{fig-conv-2.mps}
	\end{center}
	}
	
	\f \label{fig:two polygonal lines} Two convexifications of a polygonal line.
\end{center}

Let us prove the main proposition.
\begin{prop}[J.~Pach]
	\label{prop:pach theorem}
	Let $P_c$ be a convexification of a polygonal line $P$. Then 
	\[
	A(\conv P) \le A(\conv P_c).
	\]
\end{prop}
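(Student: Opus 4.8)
The plan is to prove the inequality by induction on the number $n$ of vertices of $P$, reducing the general case to the elementary geometric fact that swapping two adjacent edges of a polygonal line so that they appear in ``more convex'' angular order does not decrease the area of the convex hull. First I would record the base cases: for $n \le 3$ the polygonal line $P$ and its convexification $P_c$ have the same edge set (up to reordering and, when $P$ is closed, cyclic shift), and for a triangle the convex hull area is visibly invariant under edge permutation, so equality holds. For the inductive step, the key observation is that any convexification can be reached from $P$ by a finite sequence of \emph{adjacent transpositions} of edges: since the permutation $\pi$ sorts the edge vectors $v_i := a_{i+1}-a_i$ by angle (measured from a fixed reference direction), it is a product of transpositions of neighbouring elements, each of which exchanges two consecutive edges that are currently in the ``wrong'' angular order. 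It therefore suffices to show the area does not decrease under a single such adjacent swap, and then compose.

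The core lemma is then the following elementary claim. Let $Q = [\dots, a, a+v, a+v+w, \dots]$ be a polygonal line with two consecutive edges $v, w$ meeting at the vertex $p := a+v$, and suppose the angular order of $v$ and $w$ is ``anti-clockwise wrong'' (i.e.\ rotating from $v$ to $w$ through the interior turns clockwise, so that swapping them makes the pair locally convex). Let $Q'$ be obtained from $Q$ by replacing the vertex $p$ with $p' := a+w$, i.e.\ traversing $w$ before $v$. Then $\conv Q \subseteq \conv Q'$, hence $A(\conv Q) \le A(\conv Q')$. To see the inclusion, note that $Q$ and $Q'$ agree outside the triangle $T$ with vertices $a$, $a+v$, $a+v+w$ (equivalently $a$, $a+w$, $a+v+w$), and both pass through $a$ and through $a+v+w = a+w+v$; the two triangles $\conv[a, a+v, a+v+w]$ and $\conv[a, a+w, a+v+w]$ are reflections of each other across the midpoint of the segment $[a, a+v+w]$, and — this is where the angular hypothesis is used — the second one lies on the side of the diagonal $[a, a+v+w]$ from which all other vertices of $Q$ are ``seen'' as convex, so adjoining it to $\conv(Q \setminus p)$ enlarges the hull at least as much as adjoining the first. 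More carefully: every point of $\conv Q$ is a convex combination of vertices of $Q$; a point involving $p = a+v$ can be rewritten using $a$, $a+v+w$, and $p' = a+w$ once we check that $p$ lies in $\conv\{a, a+v+w, p'\}$ together with the other vertices, which follows from the local convexity after the swap.

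The main obstacle, and the step deserving the most care, is precisely this core lemma: making rigorous that a single adjacent edge-swap toward the sorted order yields $\conv Q \subseteq \conv Q'$, including the degenerate cases (co-directional edges $v \parallel w$, where the swap changes nothing; and the closed case, where one must also handle the cyclic-shift ambiguity and the special role of the basepoint $a_1 = b_1$). I would handle co-directional and collinear configurations separately as trivial or limiting cases. One must also be slightly careful that the reference direction used to define $\pi$ (namely $a_1 - a_n$) is consistent with the direction across which the reflection in the core lemma is taken, so that each sorting transposition is indeed an area-nondecreasing swap rather than an area-nonincreasing one; this is exactly the distinction between the two convexifications in Figure~\ref{fig:two polygonal lines}, and choosing the convention as in the construction preceding the proposition makes the inequality go the right way. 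Once the core lemma is established, the induction is immediate: apply it along any sequence of adjacent transpositions realizing $\pi$, and the areas form a nondecreasing chain from $A(\conv P)$ to $A(\conv P_c)$.
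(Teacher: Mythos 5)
Your argument hinges on the core lemma that a single adjacent transposition of edges, taken toward the sorted angular order, does not decrease the area of the convex hull, so that the areas form a nondecreasing chain along a bubble sort from $P$ to $P_c$. This lemma is false. Take $P=[(0,0),(0,10),(1,10),(1,0)]$, with edges $v_1=(0,10)$, $v_2=(1,0)$, $v_3=(0,-10)$; the convex hull is the rectangle $[0,1]\times[0,10]$ of area $10$. Measuring clockwise from $a_1-a_4=(-1,0)$, the angles of $v_3,v_2,v_1$ are $90^\circ,180^\circ,270^\circ$ respectively, so $v_1$ and $v_2$ are out of order and bubble sort swaps them. The resulting polygonal line is $[(0,0),(1,0),(1,10),(1,0)]$, whose convex hull is the triangle with vertices $(0,0),(1,0),(1,10)$ of area $5$. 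So a sorting swap strictly halved the hull area. The reason your ``$\conv Q\subseteq\conv Q'$'' argument breaks is exactly the global-versus-local issue you flag but do not resolve: the swap removes the vertex $a+v$ and inserts $a+w$, and whether the removed vertex lies in the new hull depends on the rest of the polygonal line, not on the local turn at that vertex. Here the removed vertex $(0,10)$ was an extreme point, and the new vertex $(1,0)$ coincides with an existing one.

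The paper's proof takes a genuinely different route that avoids any claim of monotonicity along a transposition chain. It first observes that one may assume, without loss of generality, that $P$ maximizes $A(\conv P)$ over all edge permutations of $P$ (since permuting edges does not change the convexification). Under this extremality assumption it inducts on the number of vertices: if some vertex $a_i$ is not on $\partial(\conv P)$, drop it, convexify, and re-insert the two edges so the added triangle lies outside; if all vertices lie on the hull, choose a minimal-area triangle of hull vertices (Lemma~\ref{lem:pach-triangle} guarantees two of its sides lie on the boundary), drop the middle vertex, and re-insert. In both cases the re-inserted polygonal line $P''$ has $A(\conv P'')\ge A(\conv P)$, and if $P''$ fails to be convex the inequality is strict, contradicting the extremality of $P$. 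That contradiction step, which has no analogue in your plan, is precisely what handles the non-monotonicity you would otherwise have to confront. As written, your proposal has a real gap at its central lemma, and I do not see how to repair it within the transposition framework; I would recommend either adopting the extremality trick or finding an alternative global argument.
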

This proposition was stated in B{\"o}r{\"o}czky et al.~\cite[Proposition 1 and Section (7.5)]{boroczky1986maximal}, where it is also conjectured that the equality holds only if $P$ is either convex or a self-intersecting trapezoid.
The proof is by a slight modification of the one by Pach~\cite{pach1978isoperimetric} for the isoperimetric problem for polygons.
We present it here for completeness of exposition since no proof of exactly this statement was published.
%

\begin{proof}[\bf Proof of Proposition~\ref{prop:pach theorem}]
    Let $P$ be a polygonal line with vertices $a_1,a_2,\dots, a_n$. Without loss of generality we can assume that it is closed.
	Indeed, adding the (directed) edge $a_na_1$ to a convexification $P_c$ of $P$ results is a convexification of the closed polygonal line obtained by adding $a_n a_1$ to $P$. If $P$ contradicts the assertion of the proposition, so does its closed modification.
	
	Furthermore, without loss of generality we may assume that $P$ has a maximal area of its convex hull among all polygonal lines generated by permutations of edges of $P$. This is because permuting the edges does not change convexifications. 
	
	The proof is by induction on the number of vertices $n$. The basis case $n=4$, when $P$ is a triangle with $a_4=a_1$, is trivial. Denote $a_0:=a_{n-1}$.
	
	1. Suppose that some point $a_i$, where $1 \le i \le n-1$, is not a vertex of $\conv P$. 
	Drop this vertex $a_i$ from $P$ and connect  vertices $a_{i-1}$ and $a_{i+1}$ by an edge. Denote this new polygonal line by $P'$, and let $P'_c$ be any of its convexifications.
	Then $A (\conv P'_c)\geq A(\conv P')=A(\conv P)$ by the assumption of induction.
	Replace the side $P'_c$ corresponding to the edge $a_{i-1}a_{i+1}$ of $P'$ by a pair of edges corresponding to $a_{i-1}a_i$ and $a_ia_{i+1}$ taken in such an order that the added triangle (possibly degenerate) does not lie in the interior of $P'_c$. 
	Denote the new polygonal line by $P''$, and note that its area is not less than the area of $\conv P$ (actually, it is strictly larger unless the triangle is degenerate). If $P''$ is convex, then it is a convexification of $P$, which proves the induction step, and hence the proposition, since all convexifications of $P$ have the same area of their convex hulls. If $P''$ is not convex, then $A(\conv P'') > A(\conv P)$, which contradicts the assumptions on~$P$ since $P''$ is obtained by a permutation of edges of~$P$.
	
	2. Now, suppose all vertices of $P$ are the vertices of its convex hull. We use the following result proved in \cite{pach1978isoperimetric}.
	\begin{lem}
		\label{lem:pach-triangle}
		Let $Q$ be a strictly convex polygon.  Suppose that the area of the triangles spanned by the vertices of $Q$ is minimized on some vertices $a,b,c$. Then two sides of $\triangle abc$ lie on the boundary of $Q$.
	\end{lem}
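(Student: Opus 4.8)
The plan is to use the minimality of $\triangle abc$ to constrain, one vertex at a time, where each of $a,b,c$ can sit on $\partial Q$, and then combine the resulting constraints by elementary logic. Write $E_{ab}$, $E_{bc}$, $E_{ca}$ for the statements that the corresponding segment is an edge of $Q$. I will aim to prove the three disjunctions
\begin{equation}
E_{ab}\lor E_{ca},\qquad E_{ab}\lor E_{bc},\qquad E_{bc}\lor E_{ca},
\end{equation}
after which a short case check (if $E_{ab}$ fails, the first two force $E_{ca}$ and $E_{bc}$; if $E_{ab}$ holds, the third supplies one more) shows that at least two of $E_{ab},E_{bc},E_{ca}$ hold, which is the claim. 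If $Q$ is a triangle there is nothing to prove, so I assume it has at least four vertices; note that $\triangle abc$ is non-degenerate because strict convexity forbids three collinear vertices.

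To obtain the first disjunction I would fix the base $bc$ and study the position of $a$ on the arc $\Gamma$ of $\partial Q$ joining $b$ to $c$ and containing $a$. For any vertex $v$ of $Q$ on $\Gamma$, the triangle $\triangle vbc$ has area $\tfrac12|bc|\,d(v)$, where $d(v)$ is the distance from $v$ to the line through $b$ and $c$; since strict convexity places no vertex of $Q$ other than $b$ and $c$ on that line, $d(v)>0$ for every vertex $v$ on the open arc $\Gamma\setminus\{b,c\}$. Minimality of $\triangle abc$ then forces $a$ to minimize $d(\cdot)$ over all vertices of $Q$ on $\Gamma\setminus\{b,c\}$.

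The key step, and the part I expect to be most delicate, is to show that this minimum can only be attained at a vertex adjacent to $b$ or to $c$ along $\partial Q$. I would argue via the convex region $R=Q\cap H$, where $H$ is the closed half-plane bounded by the line $bc$ that contains $\Gamma$; its boundary is the chord $[b,c]$ together with $\Gamma$. Placing $bc$ on the $x$-axis, $R$ lies in $\{y\ge 0\}$ with $R\cap\{y=0\}=[b,c]$, and slicing $R$ by horizontal lines shows that $\partial R$ is the bottom chord $[b,c]$, followed by an arc along which the height increases, a possibly degenerate top segment, and an arc along which the height decreases; hence $d(\cdot)$ is unimodal along $\Gamma$, rising from $0$ at $b$ to the maximum and falling back to $0$ at $c$. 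Consequently, if a vertex $v$ on $\Gamma\setminus\{b,c\}$ were adjacent to neither $b$ nor $c$, its two $\partial Q$-neighbours $v_-,v_+$ would again lie on $\Gamma\setminus\{b,c\}$, and unimodality gives $\min(d(v_-),d(v_+))\le d(v)$; strict convexity of $Q$ rules out $d(v_-)=d(v)=d(v_+)$ (that would make $v_-,v,v_+$ collinear), so one neighbour yields a strictly smaller triangle, contradicting that $v=a$ is a minimizer of $d$. Thus $a$ is the $\partial Q$-neighbour of $b$ or of $c$, i.e.\ $E_{ab}\lor E_{ca}$ holds.

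Finally, running the identical argument with base $ac$ (varying $b$ over its arc) and with base $ab$ (varying $c$ over its arc) gives $E_{ab}\lor E_{bc}$ and $E_{bc}\lor E_{ca}$, and the propositional step above completes the proof. Beyond the unimodality claim, the only points requiring care are checking that the neighbours $v_\pm$ do not coincide with $b$ or $c$ (which is exactly the assumption that $v$ is not adjacent to them) and using strict convexity to upgrade the area comparison to a \emph{strict} inequality; without strictness one only gets that some minimizing triangle has the edge property, which would still suffice for the induction in Proposition~\ref{prop:pach theorem} but is weaker than the stated lemma.
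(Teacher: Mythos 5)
The paper does not give a proof of Lemma~\ref{lem:pach-triangle}: it is stated with the remark ``we use the following result proved in \cite{pach1978isoperimetric}'' and then used immediately in the induction of Proposition~\ref{prop:pach theorem}. So there is no in-paper proof against which to compare your argument; what follows is a direct assessment of your proposal.

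Your overall strategy is sound: the propositional reduction to the three disjunctions is correct, and fixing the base $bc$, interpreting the minimality of $\triangle abc$ as minimality of the height $d(\cdot)$ to the line $bc$ over vertices on $\Gamma\setminus\{b,c\}$, and exploiting convexity of $R=Q\cap H$ is exactly the right idea. The unimodality of the height along $\Gamma$ (strictly up, at most one flat top edge, strictly down) is also a correct consequence of convexity of $R$, and you describe it essentially correctly.

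There is, however, a genuine logical gap in the sentence where you upgrade to a strict inequality. You argue: unimodality gives $\min(d(v_-),d(v_+))\le d(a)$, and strict convexity forbids $d(v_-)=d(a)=d(v_+)$, ``so one neighbour yields a strictly smaller triangle.'' That inference is invalid: ruling out the case that \emph{both} neighbours are at the same height as $a$ does not exclude the case $d(v_-)=d(a)<d(v_+)$, in which neither neighbour gives a strictly smaller triangle and no contradiction with the minimality of $d(a)$ is obtained. The correct patch is to use the unimodality in its sharp form. If $d(v_-)=d(a)$, then the edge $[v_-,a]$ is parallel to $bc$; since $R$ lies entirely on one side of any line supporting $\partial R$ and contains $[b,c]$ at height $0$, such an edge can only occur at the maximal height, and then (because there is at most one such horizontal edge, which is exactly what strict convexity of $Q$ rules out twice in a row) the heights on both sides of that edge strictly decrease, giving $d(v_+)<d(a)$. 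Symmetrically if $d(v_+)=d(a)$. With this repair the argument goes through; as you note at the end, even the non-strict version ``some minimizing triangle has the edge property'' would suffice for the use made of the lemma inside Proposition~\ref{prop:pach theorem}, since there one is free to choose which minimizing triangle to work with.
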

	
Now choose a triangle $\triangle a_ia_ja_k$ of the minimal area among the triangles spanned by the vertices of the convex hull of $P$. Note that by Lemma~\ref{lem:pach-triangle}, $a_i$, $a_j$, $a_k$ are ``consecutive'' vertices on the boundary of $\conv P$, but they are not necessary consecutive for $P$. 	Again, drop the middle vertex $a_j$ from $P$ and connect  vertices $a_{j-1}$ and $a_{j+1}$ by an edge. Denote the new polygonal line by $P'$ and let $P'_c$ be any of its convexifications. 	Replace the side $P'_c$ corresponding to the edge $a_{j-1}a_{j+1}$ of $P'$ by a pair of edges corresponding to $a_{j-1}a_j$ and $a_ja_{j+1}$ in a such an order that the added triangle lies in the exterior of $P'_c$. Denote the new polygonal line by $P''$. Again, applying the induction assumption to $P'$, we get
	\[
		A(\conv P'')\geq A(\conv P'_c)+A(\triangle a_{j-1}a_ja_{j+1})\geq
		A (\conv P')+A(\triangle a_ia_ja_k)=A(\conv P).
	\]
	As in the first part of the proof, if $P''$ is convex, then we get the required inequality.
	If not, then the first inequality above is strict and so $A(\conv P'') > A(\conv P)$, contradicting the assumptions on $P$.
	
\end{proof}	
	
	The following result by Fary and Makai~\cite[Lemma~1]{fary1982isoperimetry} is a version of Proposition~\ref{prop:pach theorem} for the signed areas. These authors define the signed area $\sigma(f)$ enclosed by a closed piecewise linear continuous planar curve $f$ by $\sigma(f) := \int_{\R^2} w(x; f) dx$, where $w(x;f)$ is the winding number of $f$ around $x \in \R^2 \setminus \im h$.  Similarly, we can define the signed area of a (directed) closed polygonal line $P$ by $\sigma(P):=\sigma(f)$ for a parametrization $f$ of $P$; this quantity does not depend on $f$ since $w(x;f )=w(x;g)$ whenever $f$ and $g$ are equivalent. 

	\begin{lem}[I.~Fary and E.~Makai]
		\label{lem:fary-convex}
		Let $P_c$ be a convexification of a closed directed polygonal line~$P$. Then $|\sigma (P)| \le |\sigma (P_c)|$.
	\end{lem}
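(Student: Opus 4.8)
The plan is to derive the lemma from Proposition~\ref{prop:pach theorem}, the additivity of the winding number under concatenation of loops, and the Brunn--Minkowski inequality, arguing by induction on the number of self-intersections of $P$. I would first reduce the claim to the inequality $|\sigma(P)|\le A(\conv P_c)$: a convexification $P_c$ is a convex polygon traversed monotonically once around, so the winding number of any parametrization of $P_c$ equals $0$ outside $\conv P_c$ and $\pm1$ on its interior (the sign being the orientation of $P_c$), whence $|\sigma(P_c)|=A(\conv P_c)$, and it suffices to bound $|\sigma(P)|$ by $A(\conv P_c)$. For the base case, when $P$ is a simple closed polygonal line it is a Jordan curve bounding a region $R$ with $\sigma(P)=\pm\area(R)$; since $R\subset\conv(\im P)$, we get $|\sigma(P)|=\area(R)\le A(\conv(\im P))\le A(\conv P_c)$, the last inequality being Proposition~\ref{prop:pach theorem}.

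For the inductive step, pick a self-intersection point $p$ of $P$. After subdividing finitely many edges of $P$ — which changes neither $\sigma(P)$ nor $\conv P_c$, since the resulting collinear sub-edges merge back in any convexification — we may assume $p$ occurs at least twice among the vertices of $P$. Cutting the cyclic vertex sequence at two of these occurrences produces two closed directed polygonal lines $P^{(1)},P^{(2)}$ whose edge multisets partition that of $P$ and such that, from a suitable starting point, $P$ traverses $P^{(1)}$ and then $P^{(2)}$; hence $w(\cdot\,;P)=w(\cdot\,;P^{(1)})+w(\cdot\,;P^{(2)})$ off the curves, and integrating gives $\sigma(P)=\sigma(P^{(1)})+\sigma(P^{(2)})$. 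Each $P^{(i)}$ has strictly fewer self-intersections than $P$ (the point $p$ drops out, and every other self-intersection of $P^{(i)}$ is one of $P$, accounted for by at most one of $i=1,2$), so by the inductive hypothesis $|\sigma(P^{(i)})|\le A(\conv P^{(i)}_c)$ for a convexification $P^{(i)}_c$ of $P^{(i)}$. Finally, since a convex polygon with prescribed edge vectors is obtained by laying them out in angular order, and Minkowski addition of convex polygons merges their angularly ordered edge sequences, we have $\conv P_c=\conv P^{(1)}_c+\conv P^{(2)}_c$, so the Brunn--Minkowski inequality yields $A(\conv P_c)\ge A(\conv P^{(1)}_c)+A(\conv P^{(2)}_c)$. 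Chaining these facts, $|\sigma(P)|\le|\sigma(P^{(1)})|+|\sigma(P^{(2)})|\le A(\conv P^{(1)}_c)+A(\conv P^{(2)}_c)\le A(\conv P_c)$, which closes the induction.

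The part I expect to require the most care is the bookkeeping around self-intersections. If two edges of $P$ overlap along a segment there are infinitely many intersection points, so I would first reduce, by a limiting argument, to polygonal lines in generic position with only finitely many transversal self-crossings; this is legitimate because $\sigma$ and $A(\conv\,\cdot\,)$ depend continuously on the vertices of $P$, the convexification depends continuously on the edge vectors, and the asserted inequality is closed. The other delicate point is the identity $\conv P_c=\conv P^{(1)}_c+\conv P^{(2)}_c$ together with its degenerate instances, where some $P^{(i)}$ is a back-and-forth segment so that $\conv P^{(i)}_c$ collapses to a segment or a point; Brunn--Minkowski still applies, but the Minkowski-sum identity has to be phrased so as to include these lower-dimensional polygons.
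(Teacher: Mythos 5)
The paper does not prove this lemma; it cites Fary and Makai~\cite{fary1982isoperimetry} and only remarks on the removal of absolute values via reversal. Your proposal therefore provides a proof where the paper has none, and it appears correct. The structure is: reduce to $|\sigma(P)|\le A(\conv P_c)$ via the Jordan-curve fact $|\sigma(P_c)|=A(\conv P_c)$; base the induction on Pach's Proposition~\ref{prop:pach theorem} for simple closed $P$; for the inductive step, cut at a self-crossing, use additivity of winding numbers (so $\sigma(P)=\sigma(P^{(1)})+\sigma(P^{(2)})$), identify $\conv P_c$ as the Minkowski sum $\conv P^{(1)}_c+\conv P^{(2)}_c$ via the classical merge-by-angle description of Minkowski sums of convex polygons, and invoke superadditivity of area. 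This is a genuinely different induction scheme from Pach's (which inducts on the number of vertices by deleting one), and it is pleasant that your argument derives the signed-area version as a corollary of the unsigned one rather than reproving from scratch.

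Two small points worth tightening. First, Brunn--Minkowski is overkill: you only need $A(Q_1+Q_2)\ge A(Q_1)+A(Q_2)$, which follows immediately from $A(Q_1+Q_2)=A(Q_1)+2V(Q_1,Q_2)+A(Q_2)$ and the nonnegativity of the mixed area $V(Q_1,Q_2)$ (elementary for convex polygons), and this phrasing also transparently covers the degenerate cases where one $Q_i$ is a segment or a point. Second, in the perturbation step one must perturb the \emph{edge-vector sequence} within the hyperplane $\sum v_i=0$ rather than perturbing vertices independently, otherwise the curve fails to close; with that adjustment the density of generic configurations and the continuity of $\sigma$, $A(\conv\,\cdot\,)$, and the convexification all hold, and the limiting argument goes through. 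You should also make explicit that after the auxiliary subdivision at the crossing point $p$ the two new collinear sub-edges do not introduce a new ``non-generic'' parallelism problem, because (as you note) they coalesce under convexification; this is used implicitly when you assert $\conv P_c=\conv P^{(1)}_c+\conv P^{(2)}_c$ with the subdivided edge lists.
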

	
	The actual assertion of~\cite{fary1982isoperimetry} is stated without the absolute values but we can always change the signs of $\sigma(P)$ and $\sigma(P_c)$ replacing $P$ and $P_c$ by their reverses. In fact, the reverse of $P$ has the same convexifications as $P$.	
	
	 In our considerations we used a different definition. Namely, the signed area $\tilde A(h)$ of  a closed curve $h \in AC[t_1,t_2]$ is given by
	 \begin{equation} \label{eq: signed general}
	 \tilde A(h):=\frac12 \int_{t_1}^{t_2} h^\bot(t) \cdot h'(t) dt.
	 \end{equation}
	 This matches our definition~\eqref{eq: signed area -} given for $h \in AC_0[0,1]$ since we can always extend $h$ to a closed curve, say, by putting $h(t):=h(1)(2-t)$ for $t \in (1,2]$, where $h^\bot \cdot h'$ does not contribute to the integral on $(1,2]$.  
	 The following result matches our definition with that of~\cite{fary1982isoperimetry}. 
	 
	 \begin{lem} \label{lem: signed}
	 Let $h$ be a piece-wise linear continuous closed planar curve on $[0,1]$. Then
	 \[
	 \tilde A(h)= \sigma(h).
	 \]
	 \end{lem}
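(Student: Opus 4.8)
The plan is to express both $\tilde A(h)$ and $\sigma(h)$ through the classical shoelace formula, via a fan decomposition of $h$ into triangle loops based at the origin. Fix a partition $0=t_0<t_1<\dots<t_n=1$ on whose cells $h$ is affine, and set $v_i:=h(t_i)$, so that $v_0=v_n$ since $h$ is closed. Write $a(p,q):=\tfrac12\,p^\bot\cdot q$ for the signed area of the triangle $\triangle(0,p,q)$ with vertices $0,p,q$; thus $\area(\triangle(0,p,q))=|a(p,q)|$, and $\sgn a(p,q)$ equals $+1$, $0$, or $-1$ according as $(0,p,q)$ is positively oriented, collinear, or negatively oriented.

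First I would compute $\tilde A(h)$ directly. Parametrizing $h$ affinely as $(1-s)v_{i-1}+sv_i$, $s\in[0,1]$, on the $i$-th cell and using $v^\bot\cdot v=0$ together with $v_i^\bot\cdot v_{i-1}=-v_{i-1}^\bot\cdot v_i$, a one-line integration gives $\tfrac12\int_{t_{i-1}}^{t_i}h^\bot\cdot h'\,dt=a(v_{i-1},v_i)$, hence
\[
\tilde A(h)=\sum_{i=1}^n a(v_{i-1},v_i),
\]
the shoelace formula with base point $0$ (here it is convenient that $0$ is the base point, as it makes the line integral literally the shoelace sum, without the telescoping needed for a general base point). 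For $\sigma(h)$, let $\gamma_i$ denote the closed piecewise linear loop $0\to v_{i-1}\to v_i\to 0$; it parametrizes $\partial\triangle(0,v_{i-1},v_i)$, traversed positively iff $a(v_{i-1},v_i)>0$. The set $\im h\cup\bigcup_j[0,v_j]$ is Lebesgue-null, and for $x$ outside it all the winding numbers $w(x;\gamma_i)$ and $w(x;h)$ are defined, with $w(x;\gamma_i)=\sgn a(v_{i-1},v_i)\cdot\I_{\{x\in\intr\triangle(0,v_{i-1},v_i)\}}$, whence $\int_{\R^2}w(x;\gamma_i)\,dx=a(v_{i-1},v_i)$. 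The crux is the additivity relation
\[
w(x;h)=\sum_{i=1}^n w(x;\gamma_i)\qquad\text{for a.e. }x\in\R^2,
\]
which holds because winding numbers add under concatenation of loops and, when the $\gamma_i$ are concatenated, the auxiliary segments $[0,v_j]$ occur in opposite-orientation pairs and cancel, leaving the loop $h$. Integrating this identity over $\R^2$ (a finite sum, so the interchange with the integral is trivial) and combining with the two displays above yields $\sigma(h)=\sum_{i=1}^n a(v_{i-1},v_i)=\tilde A(h)$.

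The only point requiring care is this displayed additivity relation: one must make sure the auxiliary segments do not create difficulties. I would handle this by discarding the null set $\im h\cup\bigcup_j[0,v_j]$ and arguing pointwise at each remaining $x$, where every winding number in sight is an honest integer read off a continuous angle function, so that the cancellation of the $[0,v_j]$-contributions is an immediate telescoping. (Equivalently one may say that the $1$-cycles $h$ and $\sum_i\gamma_i$ are homologous in $\R^2\setminus\{x\}$ and invoke the homomorphism property of the winding number on $H_1$, but the angular argument is self-contained.) Everything else — the edgewise integration for $\tilde A$ and the elementary evaluation of $\int_{\R^2}w(x;\gamma_i)\,dx$ — is routine.
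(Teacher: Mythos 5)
Your proof is correct and structurally parallel to the paper's: both decompose the polygon into a fan of triangle loops from a fixed apex, prove the identity triangle by triangle, and recover $h$ by cancelling the retraced apex-to-vertex segments. The operative difference is the choice of apex. The paper fans from the curve's starting vertex $a_1$ by way of an auxiliary loop $g$ through the triangles $[a_1,a_{i+1},a_{i+2},a_1]$, and therefore must run the cancellation argument on \emph{both} sides --- to show $\tilde A(g)=\tilde A(h)$ as well as $\sigma(g)=\sigma(h)$ --- while also invoking Green's formula for each simple triangle loop $g_i$. You instead fan from the origin, which is precisely the base point built into the integrand $h^\bot\cdot h'$; consequently the edgewise integration of $\tilde A(h)$ is \emph{already} the shoelace sum $\sum_i a(v_{i-1},v_i)$, with no auxiliary curve and no telescoping, and the cancellation is needed only once, on the winding-number side. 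You also read off $\int w(\cdot;\gamma_i)$ directly from the interior/exterior dichotomy of a triangle loop (which handles degenerate triangles $a(v_{i-1},v_i)=0$ uniformly) rather than via Green's formula. Both arguments are sound; yours is a bit more economical because the apex is matched to the definition of $\tilde A$.
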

	 We give a proof because we did not find a reference.
	 \begin{proof} 
	 By assumption, $h$ parametrizes some polygonal line  with vertices $a_1, \ldots, a_n, a_1$. Assume that $n \ge 3$, otherwise the claim is trivial by $\tilde A(h)=0$ and $\sigma(h) =0$. Let $g$ be a piecewise linear continuous parametrization of  the polygonal line  with vertices 
   \[
	 a_1, a_2, a_3, a_1, a_3, a_4, a_1, a_4, a_5, a_1, \ldots, a_1, a_{n-1}, a_n, a_1.
	 \]	 
Denote by $t_1< \ldots < t_{n-1}$ the times $t$ when $g(t)=a_1$, and consider the curves $g_i:= g|_{[t_i, t_{i+1}]}$ for $i \in \{1, \ldots, n-2\}$. Each $g_i$ is a closed simple curve, which parametrizes the closed triangular line $[a_1, a_{i+1}, a_{i+2}, a_1]$. Then
	 \[
    \tilde A(g)=\tilde A(g_1) + \ldots + \tilde A(g_{n-2}) = \sigma(g_1) + \ldots + \sigma(g_{n-2})=\sigma(g),
	 \]
	 where in the first equality we used additivity of the integral in~\eqref{eq: signed general}; in the second one we used Green's formula for the curves $g_i$; and in the last equality we used additivity of the signed area, which follows from additivity of the winding numbers:
	 \[
	 w(x; g)= w(x; g_1) + \ldots +  w(x; g_{n-2}), \qquad x \in \R^2 \setminus \im g.
	 \]
	It remains to use the equalities $\tilde A(g)= \tilde A(h)$ and $\sigma(g)=\sigma(h)$, which follow from pairwise cancellations of the integrals (in~\eqref{eq: signed general} and in the definition of a winding number) on the intervals that parametrize $[a_i, a_1]$ and $[a_1,a_i]$ for $i \in \{3, \ldots, n-1\}$.
	 \end{proof}

\section{The Euler--Lagrange equations}

Here we present the result of variational calculus used in the proof of Lemma~\ref{lem: Lagrangian}.

Let $f(t,p,q)$ and $g(t,p,q)$ be $C^1$-smooth real-valued functions, where  $(t,p,q) \in [0,1] \times \R^d \times \R^d$ for $d \in \N$.  We write the gradients of $f$, $g$ in the variables $p$, $q$ as $\frac{\partial f}{\partial p}$, $\frac{\partial f}{\partial q}$, etc. Denote by $AC[0,1]$ is the set of functions $x:[0,1] \to \R^d$ with absolutely continuous coordinates $x_1, \ldots, x_d$, and by $AC_0[0,1]$ its subset satisfying $x(0)=0$. Consider the integral functionals $F(x):=\int_0^1 f(t, x(t), x'(t)) dt$ and $G(u) := \int_0^1 g(t, x(t), x'(t)) dt $ defined for $x \in AC_0[0,1]$ such that $f(t, x(t), x'(t)) $ and $g(t, x(t), x'(t)) $ are integrable, respectively. 

\begin{theorem} \label{thm: Cesari}
Let $x_*$ be a minimizer of  $F(x)$  over $x \in AC_0[0,1]$ such that $G(x)= a$ for some fixed real $a$. Assume that there exist a $\delta >0$  and a continuous function $R: [0,1] \times \R^d \to \R$ such that $R(t, x_*'(t))$ is integrable and  the following conditions \eqref{eq: R_i} and \eqref{eq: R_0} are satisfied:
For every $x: [0,1] \to \R^d$ and $s: [0,1] \to \R$ such that $|x(t)-x_*(t)| \le \delta $ and  $|s(t)-t| \le \delta $  for all $t \in [0,1]$, we have
\begin{equation} \label{eq: R_i} \tag{$R_i$}
\Big|\frac{\partial f}{\partial p} (s(t), x(t), q) \Big| + \Big|\frac{\partial g}{\partial p} (s(t), x(t), q) \Big|\le R(t, q), \qquad t \in [0,1], q \in \R^d,
\end{equation}
and 
\begin{equation} \label{eq: R_0} \tag{$R_0$}
\Big|\frac{\partial  f}{\partial t} (s(t), x(t), q) \Big| + \Big|\frac{\partial  g}{\partial t} (s(t), x(t), q) \Big|\le R(t, q), \qquad t \in [0,1], q \in \R^d.
\end{equation}

Then there exist real $\lambda_1$ and $\lambda_2$ such that $\lambda_1^2+\lambda_2^2>0$ and a function $\xi \in AC[0,1]$ that satisfies the following three equalities:
\begin{equation} \label{eq: xi}
\xi(t) =\frac{\partial L}{\partial q} (t, x_*(t), x_*'(t)), \qquad \text{a.e. }t \in [0,1],
\end{equation}
 for $L:= \lambda_1 f + \lambda_2 g$; the Euler--Lagrange equations
\begin{equation} \label{eq: EL}
\xi'(t) = \frac{\partial L}{\partial p} (t, x_*(t), x_*'(t)), \qquad \text{a.e. }t \in [0,1];
\end{equation} 
and the transversality condition
\begin{equation} \label{eq: EL boundary cds}
\xi(1) =0.
\end{equation}
\end{theorem}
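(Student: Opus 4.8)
The conclusion is the \emph{integrated} (du Bois--Reymond) form of the Euler--Lagrange equation: \eqref{eq: xi} says that $\tfrac{\partial L}{\partial q}(t,x_*(t),x_*'(t))$ has an absolutely continuous representative $\xi$, and \eqref{eq: EL} and the transversality condition \eqref{eq: EL boundary cds} describe it, with a Lagrange multiplier $(\lambda_1,\lambda_2)$ absorbing the isoperimetric constraint. Were $x_*'$ essentially bounded this would be the textbook argument; the role of \eqref{eq: R_i} and \eqref{eq: R_0} is precisely to make the variational computation legitimate for a minimizer whose derivative is only in $L^1$. The plan is to run the classical argument on variations \emph{localized to sets where $x_*'$ is bounded}: by Lusin's theorem there are compact $K\subset[0,1]$ of measure arbitrarily close to $1$ on which $x_*'$ is continuous, hence bounded.

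\textbf{Step 1 (first variation and multiplier).} Take $\eta_1,\eta_2\in AC_0[0,1]$ Lipschitz, with $\eta_i'$ supported in such a compact $K$ and $\eta_i$ constant on each component of $[0,1]\setminus K$, so that $\eta_i(1)$ may be nonzero; for $|(\varepsilon_1,\varepsilon_2)|$ small the curve $x:=x_*+\varepsilon_1\eta_1+\varepsilon_2\eta_2$ stays within $\delta$ of $x_*$ in the sup norm. Differentiation under the integral sign is justified because the $\varepsilon$-derivative of the integrand is dominated by an integrable function: on $K$ the slot $x_*'+\varepsilon_1\eta_1'+\varepsilon_2\eta_2'$ ranges in a fixed compact set, so $f,g$ and their first derivatives are bounded there; off $K$ no $q$-perturbation occurs and the $p$-derivatives are bounded, by \eqref{eq: R_i}, by $(\|\eta_1\|_\infty+\|\eta_2\|_\infty)R(t,x_*'(t))\in L^1$. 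Hence $\Phi(\varepsilon_1,\varepsilon_2):=F(x)$ and $\Psi(\varepsilon_1,\varepsilon_2):=G(x)$ are $C^1$ near the origin with the usual formulas for the partials. Now apply the Lagrange multiplier rule: if $dG(x_*)$ is nonzero on some localized direction, fix $\eta_2$ realizing this, use the implicit function theorem on $\Psi(\varepsilon_1,\cdot)=a$, and minimality of $\Phi$ along the resulting curve forces $dF(x_*)[\eta]=\lambda\,dG(x_*)[\eta]$ for every localized $\eta$, with $\lambda$ depending only on $\eta_2$; set $\lambda_1=1$, $\lambda_2=-\lambda$. Otherwise set $\lambda_1=0$, $\lambda_2=1$. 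In every case $\lambda_1^2+\lambda_2^2>0$ and, with $L:=\lambda_1f+\lambda_2g$, the identity
\[
\int_0^1\Big(\tfrac{\partial L}{\partial p}(t,x_*,x_*')\cdot\eta(t)+\tfrac{\partial L}{\partial q}(t,x_*,x_*')\cdot\eta'(t)\Big)\,dt=0
\]
holds for every Lipschitz $\eta$ with $\eta(0)=0$ and $\eta'$ supported in a good compact set.

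\textbf{Step 2 (du Bois--Reymond and transversality).} By \eqref{eq: R_i}, $t\mapsto\tfrac{\partial L}{\partial p}(t,x_*(t),x_*'(t))$ lies in $L^1$, so $A(t):=\int_0^t\tfrac{\partial L}{\partial p}(s,x_*,x_*')\,ds$ is absolutely continuous. For $\eta$ supported in a good set $K$, integrating the first term by parts and using $\eta(0)=0$ turns the identity into
\[
\int_0^1\Big(\tfrac{\partial L}{\partial q}(t,x_*,x_*')-A(t)\Big)\cdot\eta'(t)\,dt+A(1)\cdot\eta(1)=0 .
\]
Taking $\eta(1)=0$ and letting $\eta'$ run over all mean-zero bounded functions on $K$, the du Bois--Reymond lemma shows $\tfrac{\partial L}{\partial q}(t,x_*,x_*')-A(t)$ is constant a.e. on $K$; as $K$ exhausts $[0,1]$ these constants agree, so for a single $c\in\R^d$ one has $\tfrac{\partial L}{\partial q}(t,x_*,x_*')=A(t)+c$ a.e., which is \eqref{eq: xi} with $\xi:=A+c\in AC[0,1]$, and differentiating gives \eqref{eq: EL}. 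Finally, plugging $\eta=v\,m$ (with $m$ Lipschitz, $m(0)=0$, $m(1)=1$, $m'$ supported in a good set, and $v\in\R^d$ arbitrary) into the displayed equation reduces it to $(c+A(1))\cdot v=0$, whence $\xi(1)=A(1)+c=0$, i.e.\ \eqref{eq: EL boundary cds}.

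\textbf{Main obstacle.} The delicate point is entirely in Step 1 --- legitimizing the variational computation when $x_*'\in L^1$ only. The Lusin localization tames the dangerous $q$-argument, and \eqref{eq: R_i} supplies the integrable majorant for the $p$-derivatives on the complement; \eqref{eq: R_0} plays the same part for the $t$-derivatives once inner variations (time reparametrizations) are also used, as in Cesari's general framework, which additionally yields a ``second'' Euler--Lagrange relation not asserted here. Once the domination, the Lagrange-multiplier bookkeeping, and the exhaustion/consistency step are made airtight --- this being the substance of the Tonelli--Cesari theory --- the du Bois--Reymond lemma and the transversality computation are routine.
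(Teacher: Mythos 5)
Your proof is essentially correct and, importantly, is a genuinely different presentation from what the paper does: the paper does not prove Theorem~\ref{thm: Cesari} at all, but rather explains in prose how the statement is assembled from several results of Cesari's book (Theorem~2.2.i, Section~2.7, Section~2.8, and Remark~2 on p.~34). You instead give a self-contained Tonelli-style argument. The two routes converge: the ``substitution of the boundedness assumption by conditions \eqref{eq: R_i} and \eqref{eq: R_0}'' that the paper quotes from Cesari's Section~2.7 is realized technically by exactly the Lusin localization you use — restricting variations to compact sets where $x_*'$ is continuous and bounded and using \eqref{eq: R_i} to produce an integrable majorant on the complement, where $\eta'=0$ so only the $p$-derivative (and not the $q$-derivative) of the integrand contributes. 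Your machinery (differentiation under the integral sign with a split $K$/$K^c$ domination, implicit-function-theorem Lagrange multipliers including the abnormal case $(\lambda_1,\lambda_2)=(0,1)$, the du Bois--Reymond lemma on each good compact, the exhaustion/consistency step for the constant $c$, and the transversality condition from varying $\eta(1)$) is sound, and the fact that $\partial L/\partial p(t,x_*,x_*')\in L^1$ follows from \eqref{eq: R_i} with $s=\mathrm{id}$ exactly as you say, so $A(t)=\int_0^t \partial L/\partial p$ is indeed absolutely continuous.

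Two remarks worth recording. First, your observation that \eqref{eq: R_0} is not actually consumed in deriving \eqref{eq: xi}--\eqref{eq: EL boundary cds} is correct: it controls the $t$-derivatives of $f,g$, which enter only through inner variations (time reparametrizations) and yield the ``second'' Euler--Lagrange relation; that extra conclusion is part of Cesari's Theorem~2.2.i but is deliberately omitted from the statement here, which is why the paper imports the full hypothesis set but uses only part of the conclusion. Second, a small point that a fully written-out version should spell out: the du Bois--Reymond lemma is applied to $\partial L/\partial q(t,x_*,x_*')-A(t)$ restricted to a good compact $K$; this restriction is in $L^\infty(K)$ precisely because $x_*'$ is bounded on $K$, and the constants agree across different $K$ because one can take the Lusin sets increasing, so that each later constant restricts to each earlier one on a set of positive measure. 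With those details filled in, your sketch is a complete and correct proof.
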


The cumbersome formulation of the Euler--Lagrange equations in terms of $\xi$, which extends $\frac{\partial L}{\partial q} (t, x_*(t), x_*'(t))$ by continuity, is needed to justify differentiation of this function, which a priori is defined only for a.e.\ $t$, and state its value at $t=1$.

Our reference is the book by Cesari~\cite{Cesari}. The result above is assembled from several sections of this book, and we comment on it in detail, especially because the matter appears to be complicated for a non-specialist. In general, an absolutely continuous minimizer may have unbounded derivative, and it may not satisfy the Euler--Lagrange equations. All these difficulties can be ignored if one a priori knows that a minimizer is $C^1$-smooth. However, there may not be such minimizers, or a minimizer over $C^1$ may not be a minimizer over $AC$. Even more, there may be no minimizers in $AC$. 

The base statement for Theorem~\ref{thm: variational} is Theorem 2.2.i in~\cite{Cesari}. Part (a) of this theorem asserts that a minimizer $x_0$ of the integral functional $F(x)$ over functions $x$ with fixed $x(1)$ satisfies the Euler--Lagrange equations under the additional assumption that $x_0'$ is essentially bounded. This assumption is substituted by conditions \eqref{eq: R_i} and \eqref{eq: R_0}  later on in Section~2.7; see also~Eq.~(4.2.10) on p.~168. The assumption that the endpoint $x(1)$ is fixed is removed in Part~(i) of Theorem~2.2.i (cf.~Section~2.8), resulting in the transversality condition. Lastly, Remark~2 on~p.~34 (cf.~Section~4.8) of~\cite{Cesari} reduces our problem of minimization of $F(x)$ subject to the integral constraint $G(x) = a$ to an unconstrained problem of minimizing $\lambda_1 F + \lambda_2 G$, where $\lambda_1$ and $ \lambda_2$ are the Lagrange multipliers. 

Finally, we note that there exist sufficient conditions for smoothness of the minimizers that satisfy the Euler--Lagrange equations. The corresponding results, called {\it regularity theorems}, can be found in Section 2.6 of~\cite{Cesari}. 

\section*{Acknowledgements} I thank Arseniy Akopyan for his tremendous help with the geometric part of the paper. I am also grateful to the three anonymous referees for their valuable comments, which led to further improvement of the paper. This work was supported in part by Dr Perry James (Jim) Browne Research Centre.

\bibliographystyle{plain}
\bibliography{convdev}
\end{document}